\def\ZZ         {{\mathbb Z}}
\def\CC         {{\mathbb C}}
\def\LL         {{\mathbb L}}  
\def\QQ         {{\mathbb Q}}
\def\PP         {{\mathbb P}}
\def\NN         {{\mathbb N}}
\def\dim        {{\rm dim}}
\def\codim        {{\rm codim}}
\newtheorem{theorem}{Theorem}[section]
\newtheorem{lemma}[theorem]{Lemma}
\newtheorem{proposition}[theorem]{Proposition}
\theoremstyle{definition}
\newtheorem{definition}[theorem]{Definition}
\newtheorem{conjecture}[theorem]{Conjecture}
\newtheorem{example}[theorem]{Example}
\theoremstyle{remark}
\newtheorem{remark}[theorem]{Remark}
\begin{document}

\title{Motivic  zeta functions and infinite cyclic covers}
\date{\today}

\author{Manuel Gonz\'alez Villa}
\address{M. Gonz\'alez Villa: Department of Mathematics, University of Wisconsin-Madison, 480 Lincoln Drive, Madison, WI 53706, USA}
\email {villa@math.wisc.edu}

\author{Anatoly Libgober}
\address{A. Libgober: Department of Mathematics, University of Illinois at Chicago, 851 S Morgan Street, Chicago, IL 60607, USA}
\email {libgober@uic.edu}

\author{Lauren\c{t}iu Maxim}
\address{L. Maxim: Department of Mathematics, University of Wisconsin-Madison, 480 Lincoln Drive, Madison, WI 53706, USA}
\email {maxim@math.wisc.edu}

\date{\today}

\keywords{infinite cyclic cover of finite type, motivic infinite cyclic cover, motivic infinite cyclic zeta function, Milnor fiber, motivic Milnor fiber, motivic zeta function.}

\dedicatory{To Lawrence Ein on the occasion of his $60$th birthday}

\subjclass[2010]{32S55, 14J17, 14J70, 14H30, 32S45}

\begin{abstract}
We associate with an infinite cyclic cover of 
a punctured neighborhood of  a simple normal 
crossing divisor on a complex quasi-projective manifold 
(assuming certain finiteness conditions are satisfied)
a rational function  in  $K_0({\rm Var}^{\hat \mu}_{\mathbb{C}})[\LL^{-1}]$, 
which we call {\it motivic infinite cyclic zeta function}, 
and show its birational invariance.
Our construction is a natural extension of the notion of {\it motivic infinite cyclic covers} introduced by the authors, and as such, it generalizes  the Denef-Loeser motivic Milnor zeta function of a complex hypersurface singularity germ. 
\end{abstract}
\maketitle

\section{Introduction}

Motivated by the importance in topology (e.g., in knot theory \cite{Ro}, but see also \cite{MilnorInfCyclic}) and algebraic geometry (e.g., for the study of Alexander-type invariants of complex hypersurface complements, see \cite{DL, DN, L1, L3, MaximComment}) of infinite cyclic covers,  and by the work of Denef and Loeser on motivic invariants of singularities of complex hypersurfaces, in \cite{GVLM} we attached to an infinite cyclic cover associated to a 
punctured neighborhood of a simple normal crossing divisor $E$ on a complex quasi-projective manifold $X$ and a holonomy map $\Delta$, an element $S_{X,E,\Delta}$ in the Grothendieck ring 
$K_0({\rm Var}^{\hat \mu}_{\mathbb{C}})$ of algebraic $\mathbb{C}$-varieties
endowed with a good action of the pro-finite group ${\hat \mu}=\lim \mu_n$ of roots of unity, which we called a {\it motivic infinite cyclic cover}. 

In this note, we extend the above-mentioned construction to attach to an infinite cyclic cover associated to a 
punctured neighborhood of a simple normal crossing divisor $E$ on a complex quasi-projective manifold $X$, together with holonomy $\Delta$ and log discrepancy $\nu$, a rational function $Z_{X, E, \Delta, \nu}(T)$ in $K_0({\rm Var}^{\hat \mu}_{\mathbb{C}})[\LL^{-1}][[T]]$, which we call a {\it motivic infinite cyclic zeta function}. It is worthwhile to notice that, although in \cite{GVLM} we allowed the holonomy map to take arbitrary integer values, here we restrict the holonomy to always take strictly positive values. This technical condition is preserved under blow-ups (hence under birational maps), and it allows us to relate    $Z_{X, E, \Delta, \nu}(T)$  with $S_{X,E,\Delta}$.

Our construction of the motivic infinite cyclic zeta function  is topological in the sense 
that it does not make use of arc spaces. One of our main results, Theorem \ref{Welldefined}, shows that our notion of motivic infinite cyclic zeta function is a birational invariant, or equivalently, $Z_{X, E, \Delta, \nu}(T)$ is an invariant of the punctured neighborhood of $E$ in $X$. The main results of this note are enhanced versions of (and rely on) similar facts about motivic infinite cyclic covers obtained in our earlier paper \cite{GVLM}.

Finally, in the last section we discuss the significance of motivic infinite cyclic zeta functions for the monodromy conjecture.

\smallskip

\textbf{Acknowledgments.}  The authors are grateful to the anonymous referee for carefully reading the manuscript, and for his valuable comments and constructive suggestions. 
 M. Gonz\'alez Villa is partially supported by the grant MTM2013-45710-C2-2-P from Ministerio de Econom\'{\i}a y Competitividad of the Goverment of Spain. A. Libgober is partially supported by a grant from the Simons Foundation. L. Maxim is partially supported by a NSF grant.


\section{Infinite cyclic cover of finite type}\label{s2}

Let $X$ be a smooth complex quasi-projective variety of dimension $r+1$ and $E$ a (reduced) simple normal crossing divisor on $X$ which shall be called a {\it deletion} 
(or {\it deleted) divisor}. 
Let $E = \sum_{i \in J} E_i$ be a decomposition of $E$ into irreducible components $E_i$, where we assume that all divisors $E_i$ are smooth. 
We use the following natural stratification of $X$ given by the intersections of the irreducible components of $E$: for each $I \subseteq J$ consider
\begin{equation}\label{str}E_I = \bigcap_{i \in I}E_i  \quad \hbox{ and } \quad  E^\circ_I = E_I \setminus \bigcup_{j \not \in I} E_j.\end{equation}
Clearly, $X = \bigcup_{I \subseteq J} E^\circ_I$, $X\setminus E= E^\circ_\emptyset$ and $E=\bigcup_{\emptyset \ne I \subseteq J} E^\circ_I$. 

Let $T^*_{X, E}$ be a punctured regular neighborhood of $E$ on $X$, e.g., see \cite[Section 2]{GVLM} and the references therein. Note that $T^*_{X, E}$ is homotopy equivalent to the boundary of a regular neighborhood of $E$ in $X$, which sometimes is called the {\it link} of $E$. 
In the following, we may discard the subscript $X$ and just 
write $T^*_E$. Note that  $T^*_E$ is a union of locally trivial fibrations  $T^*_{E^\circ_I} \rightarrow E^\circ_I$ over the strata $ E^\circ_I$ (with $\emptyset \ne I \subseteq J$) of $E$, the fiber of the latter fibration being diffeomorphic to $(\mathbb{C}^\ast)^{ | I |}$, where $| I |$ denotes the number of elements in the set $I$.

\begin{definition}[\cite{GVLM}]\label{defft} {\it Infinite cyclic cover of finite type.} 
\\ Let $\Delta: \pi_1(T^*_E) \to \ZZ$ be an epimorphism\footnote{The surjectivity assumption is made here solely for convenience (in which case the corresponding infinite cyclic cover is connected), all results in this paper being valid for arbitrary homomorphisms to $\ZZ$.},
 and for any $i \in J$ let $\delta_i$ denote the boundary 
of a small oriented disk  
transversal to the irreducible component $E_i$. 
Let $\widetilde{T}^{*}_{X, E, \Delta}$ be the corresponding infinite cyclic cover (with Galois group $\ZZ$) of the 
punctured regular neighborhood $T^*_E$ of a simple normal crossing
divisor $E \subset X$.
We call such an infinite cyclic cover $\widetilde{T}^{*}_{X, E, \Delta}$ 
{\it of finite type} if $m_i=\Delta(\delta_i)\neq 0$ for all $i \in J$.
Sometimes we omit $\Delta$ and $X$ in the notation 
and write simply $\widetilde{T}^*_E$. 
The map $\Delta$ is also referred to as the {\it holonomy}
of this infinite cyclic cover.
\end{definition}

\begin{remark} Proposition 2.4 of \cite{GVLM} shows that if $\widetilde{T}^*_E$ is an infinite cyclic covers of finite type, then, for any $i \in \ZZ$, the rational vector spaces $H_c^i(\widetilde{T}^*_E)$ and $H^i(\widetilde{T}^*_E)$ are finite dimensional. This fact justifies the terminology used in Definition \ref{defft}. In this note, we deal only with infinite cyclic covers of finite type.\end{remark}

\begin{remark} The infinite cyclic cover $\widetilde{T}^{*}_{X, E, \Delta}$ has the structure of a complex manifold, but it is not an algebraic variety. An algebro-geometric (motivic) realization of $\widetilde{T}^{*}_{X, E, \Delta}$ was given in \cite{GVLM} via the associated motivic infinite cyclic cover $S_{X,E,\Delta}$. Moreover, it was proved in \cite[Theorem 3.7]{GVLM} that $S_{X,E,\Delta}$ is an invariant of the punctured neighborhood of $E$ in $X$, i.e., it is invariant under the following equivalence relation.
\end{remark}

\begin{definition}[\cite{GVLM}]\label{equivnbhd}
 Let $T_1=T_{X_1,E_1}$ and $T_2=T_{X_2,E_2}$ be two regular 
neighborhoods of normal crossing divisors, and 
$\Delta_i: \pi_1(T_i^*) \rightarrow \ZZ$, $i=1,2$, be 
epimorphisms defined on the fundamental groups of the corresponding punctured
neighborhoods. 

We say that the punctured neighborhoods $T^*_1$ and $T^*_2$ of $E_1$ and resp. $E_2$ are {\it equivalent} if there exists a birational map $\Phi:X_1 \to X_2$, which is biregular on $T^\ast_1 \subset X_1$ (and, respectively, $\Phi^{-1}$ is biregular on $T^\ast_2 \subset X_2$) and which moreover induces a map $\Phi\vert_{T_1^*}: T_1^* \rightarrow T_2^*$ such that $\Phi(T_1^*)$ and $T_2^*$ are deformation retracts of 
each other.

We say that the pairs $(T_1^*,\Delta_1)$ and $(T_2^*,\Delta_2)$ 
are {\it equivalent} if the punctured neighborhoods $T^*_1$ and $T^*_2$ are {equivalent}  in the above sense, 
 and  the following diagram commutes:
\begin{equation}\label{commutativity}
\xymatrix{
\pi_1(T_1^*)\ar[rd]_{\Delta_1}  \ar[rr]^{(\Phi\vert_{T^*_1})_*} && \pi_1(T_2^*) \ar[ld]^{\Delta_2}\\
& \ZZ &
}
\end{equation}
Here $(\Phi\vert_{T^*_1})_*$ is the homomorphism induced by  $\Phi\vert_{T_1^*}$ on the 
fundamental groups.
\end{definition}

The basic example of equivalent pairs $(T_1^*,\Delta_1)$ and $(T_2^*,\Delta_2)$, which is crucial for the proofs of Theorem \ref{Welldefined} and \cite[Theorem 3.7]{GVLM} is the following. 

\begin{example}\label{blowupholomony}
Let $(X,E)$ be as above, and consider the blow-up $p : X' := Bl_Z X \rightarrow X$  of $X$ along the smooth center $Z \subset X$ of codimension $\geq 2$ in $X$. Assume moreover that  $Z$ is contained in $E$ and  has normal crossings with the components of $E$. Denote by $E_*$  the exceptional divisor of the blow-up $p$, which is isomorphic to the projectivized normal bundle over $Z$, i.e.,  $E_* \cong \mathbb{P}(\nu_Z)$. 
 Let us denote the preimage  of the divisor $E_i$ in $X'$ by $E'_i$. Denote by $E'$ the normal crossing divisor in $X'$ formed by the $E'_i$ together with $E_\ast$. 
Assume that there is $I \subseteq J$ maximal such that $Z$ is contained in $E_I$. We 
consider the (surjective) homomorphism given by  
the composition $$\Delta' : \pi_1({T}^*_{X', E'}) 
\rightarrow \pi_1(T^*_{X,E}) 
\buildrel \Delta \over
\rightarrow \mathbb{Z},$$ 
resulting from the identification $T^*_{X'E'} \overset{\cong}{\rightarrow} T^*_{X,E}$ 
induced by the blow-down map and the fact that the center $Z$ is contained in $E$. 
We then have: 
\begin{equation}\label{blowupholomonyeqn}\Delta'(\delta'_i)= \Delta(\delta_i)=m_i \quad  (i \in J) \ \ \hbox{ and } \ \ m_*:=\Delta'(\delta_*) = \sum_{i \in I} m_i,\end{equation} where $\delta'_i$ and $\delta_*$ are the meridians about the components $E'_i$ and $E_*$ of $E'$.  Indeed, the blow-down map 
takes the $2$-disk transversal to $E_*$ (at a generic point) and 
bounded by $\delta_*$, to 
the disk in $X$ transversal to the components $E_i, i\in I$, containing 
$Z$ and disjoint from the remaining components of $E$, i.e., 
one has the relation $\delta_*=\sum_{i \in I} \delta_i$ in $H_1(T^*_E)$.\end{example}

  
\section{Motivic infinite cyclic zeta functions}\label{s3}
Most of our calculations will be done in the ring of power series with coefficients in the localized  Grothendieck ring $\mathcal{M}^{\hat{\mu}}_{\CC}=K_0({\rm Var}^{\hat{\mu}}_{\CC})[\LL^{-1}]$, where ${\rm Var}^{\hat{\mu}}_{\CC}$ is the category of complex algebraic varieties  endowed with good $\hat{\mu}$-actions. Let us recall some definitions, e.g., see \cite{Barcelona}.

For a positive integer $n$, we denote by $\mu_n$ the group of all $n$-th roots of unity (in $\mathbb{C}$).  The groups $\mu_n$ form a projective system with respect to the maps   $\mu_{d \cdot n} \to \mu_n$ defined by $ \alpha \mapsto \alpha^d$, and we denote by $\hat \mu:=\lim \mu_n$ the projective limit of the $\mu_n$.

Let $X$ be a complex algebraic variety. A good $\mu_n$-action on $X$ is an algebraic action $\mu_n \times X \to X$, such that each orbit is contained in an affine subvariety of $X$. (This last condition is automatically satisfied if $X$ is quasi-projective.) A {good} $\hat\mu$-action on $X$ is a $\hat\mu$-action which factors through a good $\mu_n$-action, for some $n$.

The Grothendieck ring $K_0({\rm Var}^{\hat{\mu}}_{\CC})$ of  the category ${\rm Var}^{\hat{\mu}}_{\CC}$ of complex algebraic varieties  endowed with a good $\hat{\mu}$-action is generated by classes $[Y, \sigma]$ of isomorphic varieties endowed with good $\hat\mu$-actions, modulo  the following relations:
\begin{itemize}
\item $[Y, \sigma] = [Y \setminus Y', \sigma_{|_{Y \setminus Y'}}] + [Y', \sigma_{|_{Y'}}]$,  if $Y'\subseteq Y$ is a closed $\sigma$-invariant subset.
\item $[Y \times Y', (\sigma, \sigma')] = [Y, \sigma][Y', \sigma']$.
\item $[Y \times \mathbb{A}^1_\mathbb{C} , \sigma] = [Y \times \mathbb{A}^1_\mathbb{C}, \sigma']$,  if $\sigma$ and $\sigma'$ are two affine liftings of the $\mathbb{C}^\ast$-action on $Y$.
\end{itemize} 
The third relation above is included for completeness. However, it is not needed in this paper.

We let $\mathbb{L}$ be the class in $K_0({\rm Var}^{\hat{\mu}}_{\CC})$ of $\mathbb{A}^1_\mathbb{C}$, with the trivial $\hat\mu$-action. We denote by $\mathcal{M}^{\hat{\mu}}_\CC$ the ring obtained from $K_0({\rm Var}^{\hat{\mu}}_{\CC})$ by inverting $\LL$.

\medskip

We now introduce the main objects of the paper. Firstly, recall that $J$ denotes the index set for the irreducible components of the deletion divisor $E$. For $I \subseteq J$, we can use the exact sequence 
$$\mathbb{Z}^{| I|} \rightarrow \pi_1(T^\ast_{E^\circ_I}) \rightarrow \pi_1(E^\circ_I) \rightarrow 0$$ of the locally trivial fibration  $T^\ast_{E^\circ_I} \rightarrow E_I^{\circ}$ together with the (restricted) holomony $\Delta: \pi_1(T^\ast_{E^\circ_I})\rightarrow \ZZ$ to define a map 
 \begin{equation}\label{maptildeEI}
\Delta_I : \pi_1(E^\circ_I) \rightarrow \ZZ/ m_I\ZZ \end{equation}
where $m_I : = \gcd(m_i | i\in I)$ is the index of the image of $\ZZ^{\vert I \vert}$ in $\ZZ$. For complete details,  see Lemma 3.1 and 3.2 in \cite{GVLM}. 

\begin{definition}\label{ucov} We denote by  $\widetilde{E^\circ_I}$ the unramified cover of  $E^\circ_I$ with Galois group $\ZZ / m_I\ZZ$ induced by the map $\Delta_I$ from (\ref{maptildeEI}).\end{definition}
  
Remark that the cover $\widetilde{E^\circ_I}$ is an algebraic variety endowed with a good $\mu_{m_I}$-action. The cover $\widetilde{E^\circ_I}$  has $n$ connected components, where $n$ is the index of $\Delta(\pi_1(T^\ast_{E^\circ_I}))$ in $\ZZ$.  Each of the connected components is the cyclic cover
of $E_I^\circ$ with the covering group $n\ZZ/ m_I\ZZ$. 

\begin{remark}\label{fib}
As shown in \cite{GVLM}, the locally trivial $(\CC^{\ast})^{|I|}$-fibration $T^*_{E^\circ_I} \rightarrow E^\circ_I$ induces a fibration
$$\widetilde{T}^*_{E^\circ_I} \rightarrow \widetilde{E}^\circ_I$$ on the infinite cyclic cover $\widetilde{T}^*_{E^\circ_I}$, 
with connected fiber $\widetilde{(\CC^{\ast})^{|I|}}\cong (\CC^{\ast})^{|I|-1}$, the infinite cyclic cover of $(\CC^{\ast})^{|I|}$ defined by $\ker(\ZZ^{|I|} \to m_I\ZZ)$.
\end{remark}

In order to define the motivic zeta function of an infinite cyclic cover,  we also need the notion  of {\it log discrepancy} with respect to a pair in the sense of, for instance, Kollar and Mori \cite{KollarMori}. The actual definition of a log discrepancy is not needed here, but the reader may consult \cite[Section 2.2 and Definition 2.25]{KollarMori} for complete details. 
Let us only recall that for a pair $(X, E)$ as in our setup, a log discrepancy $\nu$ with respect to $(X,E)$ is determined by a collection of integers $\{\nu_i\}_{i \in J}$, one for each irreducible component $E_i$ of $E$, $i \in J$, and moreover it satisfies the following blow-up relation (\ref{eleprocld}), which can be derived from  \cite[Lemma 2.29]{KollarMori} under the additional hypothesis listed below. 

Let $X'$ be the blow-up $p: X' := Bl_ZX \rightarrow X$ of $X$ along a smooth center $Z \subset X$ of codimension $\geq 2$.  Assume moreover  that the center $Z$ is contained  in $E$ and  that it has normal crossings with the components of $E$, as in Example \ref{blowupholomony}.  Denote by  $E_\ast$ the exceptional divisor of the blow-up, and by $E'_i$ the proper transform of $E_i$ ($i \in J$), with $E'=E_*+\sum_{i\in J}E'_i$. Then the log discrepancies $\nu'_i$ (resp. $\nu_\ast$) of $E'_i$   (resp. $E_\ast$) satisfy
\begin{equation}\label{eleprocld}
\nu'_i = \nu_i \ \hbox{ for all } i \in J, \  \hbox{ and } \ \nu_\ast = \sum_{i \in I} \nu_i + c, 
\end{equation}
where $I$ is the maximal subset of $J$ such that $Z$ is contained in $E_I$, and $c$ denotes the codimension of $Z$ in $E_I$. 

\begin{example}\label{diffformdiscr}
(i) A global holomorphic form $\omega$ on $X$ supported along a simple normal crossing divisor $E$ defines a log-discrepancy $\nu$, with $\nu_i$ given by the multiplicity  of $E_i$ in the divisor of the form $\omega$ plus one, i.e., $\nu_i -1 = {\rm ord}_{E_i} \omega$.

(ii)  Let $\{ f=0 \}$ be a germ of holomorphic function at the
  origin in $\CC^n$, let  $\pi: X \rightarrow \CC^n$ be an embedded resolution of the singularities of $f$, and let $E$ be the total transform of $\{f=0\}$.  The standard volume form $\omega = dz_1\wedge...\wedge dz_n$ gives a log-discrepancy of the pair $(X, E)$, see \cite{I}. See also Section \ref{relationwithmotivic}.\end{example}

\begin{remark} A blow-up $p: X' := Bl_ZX \rightarrow X$ as above identifies the set of irreducible components, labelled by the set of indices $J$, 
of the normal crossing divisor $E$ on $X$ with a subset of irreducible components of the divisor $E'$ in $X'$. Moreover, the punctured neighborhood of $E$ in $X$ and that of $E'$ in  $X'$ are homeomorphic. 
For birational maps $X
\rightarrow Y,$ i.e., for compositions of blow-ups and blow-downs, there are similar identifications of certain components of normal crossing divisors in $X$ and resp. $Y$ which have homeomorphic punctured neighborhoods.
The blow-up condition (\ref{eleprocld}) may impose in this case additional relations on the values $\nu_i$ of a log-discrepancy with respect to a pair $(X,E)$. 
For example, suppose that the exceptional divisor in a $4$-manifold $X'$
is $E_*=\PP^2 \times \PP^1$ and can be contracted to either
$Z_1=\PP^2$ in $X_1$ or $Z_2=\PP^1$ in $X_2$. Suppose a divisor $F$ intersects $E_*$
in $\PP^2 \times P$, with $P \in \PP^1$, and a divisor $G$ intersects $E_*$ along $\PP^1 \times \PP^1$.
Then, since the codimension of $Z_1$ in $F$ is $1$ and the codimension of $Z_2$ in $G$ is $2$, and the corresponding values $\nu_F$ and $\nu_G$ of the log-discrepancies are invariant because of the first relation in (\ref{eleprocld}), 
it follows that  $\nu_\ast = \nu_F+1=\nu_G+2$.
\end{remark}

The previous remark motivates the following definition.

\begin{definition}\label{equivnbhdld} Let $T_1=T_{X_1,E_1}$ and $T_2=T_{X_2,E_2}$ be regular 
neighborhoods of normal crossing divisors as above, with holonomies  
$\Delta_i: \pi_1(T_i^*) \rightarrow \ZZ$, $i=1,2$, and log discrepancies $\nu^i$, $i=1,2$ defined with respect to  $(X_1,E_1)$ (resp. $(X_2,E_2)$). We say that the triples $(T_1^*,\Delta_1, \nu^1)$ and $(T_2^*,\Delta_2, \nu^2)$ 
are {\it equivalent} if $(T_1^*,\Delta_1)$ and $(T_2^*,\Delta_2)$ are equivalent in the sense of Definition \ref{equivnbhd} and  the log-discrepancies $\nu^1$ and $\nu^2$ are connected by a finite sequence of blow-up relations as in (\ref{eleprocld}).\end{definition}

The following Definition and Lemma provide a natural situation where log-discrepancies exist.

\begin{definition} Let $T_E^*$ be a punctured neighborhood of a normal 
crossing divisor $E$ on a smooth projective manifold $X$. A holomorphic volume form
on $T_E^*$ is a meromorphic form $\omega$ on $X$ such that the irreducible
components of the divisor associated to $\omega$ are either irreducible components of $E$ or
have empty intersection with $E$. 
\end{definition}

\begin{lemma} Let $T^*_E$ be a punctured neighborhood of a normal 
crossing divisor $E$ on a smooth projective manifold $X$ and let $\omega$ be a holomorphic volume form on $T^*_E$. Then
$\nu(E_i)={\rm ord}_{E_i}(\omega)-1$ defines a log-discrepancy, i.e.,  
satisfies  the blow-up relations  in (\ref{eleprocld}). 
\end{lemma}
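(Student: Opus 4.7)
The plan is to reduce the blow-up relation (\ref{eleprocld}) to a local computation in coordinates adapted to the center $Z$. Both the orders of vanishing appearing in $\nu$ and the behavior of these orders under $p$ are determined by looking at generic points of the relevant divisors, so it suffices to work in a neighborhood of a generic point of $Z$, where by the maximality of $I$ only the components $E_i$ with $i\in I$ pass through.

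First I would choose local holomorphic coordinates $(z_1,\ldots,z_{r+1})$ on $X$ around such a generic point of $Z$, so that $E_i=\{z_i=0\}$ for $i\in I=\{1,\ldots,k\}$ and $Z=\{z_1=\cdots=z_{k+c}=0\}$; this is possible because $Z\subseteq E_I$ has normal crossings with $E$ and codimension $c$ inside $E_I$. By the definition of a holomorphic volume form on $T^*_E$, the only divisorial components of $\omega$ meeting this neighborhood are the $E_i$ for $i\in I$, so we may write
\[
\omega \;=\; u(z)\cdot z_1^{m_1}\cdots z_k^{m_k}\cdot dz_1\wedge\cdots\wedge dz_{r+1},
\]
where $m_i=\mathrm{ord}_{E_i}(\omega)$ and $u$ is a nowhere vanishing holomorphic function.

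Next I would compute $p^*\omega$ in a standard affine chart of the blow-up $p\colon Bl_ZX\to X$, say $z_1=w_1$, $z_i=w_1w_i$ for $2\le i\le k+c$, and $z_j=w_j$ for $j>k+c$, in which the exceptional divisor is $E_*=\{w_1=0\}$. The Jacobian gives
\[
dz_1\wedge\cdots\wedge dz_{k+c} \;=\; w_1^{\,k+c-1}\, dw_1\wedge\cdots\wedge dw_{k+c},
\]
while the monomial factor contributes $w_1^{\,m_1+\cdots+m_k}\, w_2^{m_2}\cdots w_k^{m_k}$. Hence $\mathrm{ord}_{E_*}(p^*\omega)=\sum_{i\in I}m_i+(k+c-1)$, and $\mathrm{ord}_{E'_i}(p^*\omega)=m_i$ in a chart of the blow-up where the proper transform $E'_i$ is visible. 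Away from $Z$ the map $p$ is an isomorphism, so $\mathrm{ord}_{E'_i}(p^*\omega)=m_i$ holds for every $i\in J$.

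Finally I would substitute $m_i=\nu_i-1$ (the convention of Example \ref{diffformdiscr}(i), consistent with the formula defining $\nu$): for $i\in J$, $\nu'_i=\mathrm{ord}_{E'_i}(p^*\omega)+\text{(shift)}=\nu_i$, and
\[
\nu_* \;=\; \mathrm{ord}_{E_*}(p^*\omega)+\text{(shift)} \;=\; \sum_{i\in I}(\nu_i-1)+(k+c-1)+1 \;=\; \sum_{i\in I}\nu_i + c,
\]
which is exactly (\ref{eleprocld}). Nothing in the argument is genuinely hard; the one point requiring care is writing $\omega$ in the monomial form above, which uses the hypothesis that the divisor of $\omega$ has no irreducible components outside $E\cup(X\setminus E)$—equivalently, that the only divisorial poles/zeros of $\omega$ near $Z$ are along the $E_i$ with $i\in I$.
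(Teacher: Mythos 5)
Your local-coordinate computation is correct and is the natural (and essentially only) way to prove this; the paper itself states the lemma without proof, so there is no paper argument to compare against. The Jacobian count $w_1^{k+c-1}$, the pullback of the monomial factor giving $w_1^{\sum_{i\in I}m_i}$, and the final bookkeeping $\nu_\ast=\sum_{i\in I}\nu_i+c$ all check out.

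One point worth flagging explicitly rather than passing over: as printed, the lemma asserts $\nu(E_i)=\mathrm{ord}_{E_i}(\omega)-1$, whereas Example \ref{diffformdiscr}(i) and Theorem \ref{milnorfiberinfcover}(2) both use $\nu_i-1=\mathrm{ord}_{E_i}(\omega)$, i.e.\ $\nu_i=\mathrm{ord}_{E_i}(\omega)+1$. These two conventions are \emph{not} equivalent, and only the ``$+1$'' version satisfies (\ref{eleprocld}): with the ``$-1$'' version one gets $\nu_\ast=\sum_{i\in I}\nu_i+\codim_X Z+|I|-2$, which agrees with $\sum_{i\in I}\nu_i+c$ only when $|I|=1$. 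You have tacitly used the ``$+1$'' convention (writing $m_i=\nu_i-1$), which is the right one; but your parenthetical remark that this is ``consistent with the formula defining $\nu$'' in the lemma statement is not accurate --- the lemma statement has a sign typo, and your proof is implicitly correcting it. It would strengthen the write-up to say so plainly.

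Two small presentational remarks. First, after reducing to a generic point of $Z$ you should make explicit why the transversal components $E_j$ ($j\notin I$) can be ignored: they do not pass through a generic point of $Z$, and the order of $p^*\omega$ along their proper transforms (and along $E_\ast$, $E'_i$) is unchanged by removing them. Second, for $\mathrm{ord}_{E'_1}(p^*\omega)=m_1$ you need a different affine chart than the one you wrote down (in the chart $z_1=w_1$, $z_i=w_1w_i$, the proper transform $E'_1$ is not visible since $p^{-1}(E_1)$ coincides there with $E_\ast$). You acknowledge this (``in a chart where the proper transform is visible''), but the symmetric chart with $z_2=w_2$, $z_i=w_2w_i$ should at least be mentioned for completeness.
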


\begin{example} Let $\{f=0\}$ be a germ of holomorphic function at the
  orgin in $\CC^n$ as in Example \ref{diffformdiscr} (ii).  The standard volume form $\omega = dz_1\wedge...\wedge dz_n$ defines a  holomorphic volume form on a tubular neighborhood of the strict transform of $\{ f=0\}$ in the compactification of $X$, where $\pi : X \rightarrow \CC^n$ is an embedded resolution of the singularities of $f$. The corresponding log-discrepancies coincide with those used by Igusa in \cite{I}.
\end{example}

We introduce now the main object of this paper, the motivic infinite
cyclic zeta function. In order to do 
so we will assume from now that the holonomy map $\Delta$ takes only strictly positive values. The reason for this assumption is two-fold. This condition is enough for our  definition and it is stable under blow-ups, see Remarks \ref{rmk1} and \ref{rafterbi}.  It also allows to relate motivic infinite cyclic zeta functions and the motivic infinite cyclic covers from \cite{GVLM}, see Remark \ref{lznes}.

\begin{definition}\label{maindef} {\it Motivic zeta function associated to an infinite cyclic cover of finite type and a choice of log-discrepancy}
\newline Let $T_E^*$ be the punctured neighborhood  
of a normal crossing divisor $E$ in 
a quasi-projective manifold $X$ as in Section \ref{s2}, let $\Delta: \pi_1(T^*_E) \rightarrow \ZZ$ be an epimorphism such that 
the corresponding infinite cyclic cover $\widetilde{T}_{X,E}^*$ 
is of finite type, and let $\nu$ be a log-discrepancy with respect to $(X, E)$.  Assume there is a choice of orientations of all $\delta_i$ with $i \in J$ such that $m_i > 0$. 
For each fixed subset $A \subseteq J$, we define the corresponding 
{\it motivic infinite cyclic zeta function} (of finite type) of $T^*_E$ as
\begin{equation}\label{defnmzf}Z^A_{X, E, \Delta, \nu}(T) :=  \sum_{\substack{\emptyset \not = I \subseteq J\\ A \cap I \ne \emptyset}} [\widetilde{E^\circ_I}, \sigma_I](\mathbb{L}-1)^{|I| -1} \prod_{i \in I} \frac{\mathbb{L}^{-\nu_i}T^{m_i}}{1 - \mathbb{L}^{-\nu_i}T^{m_i}} \in \mathcal{M}^{\hat{\mu}}_{\CC}[[T]]_{{\rm sr}},
\end{equation}
where $\mathcal{M}^{\hat{\mu}}_{\CC}[[T]]_{{\rm sr}}$ is the $\mathcal{M}^{\hat{\mu}}_{\CC}$-submodule of \emph{rational series} of $\mathcal{M}^{\hat{\mu}}_{\CC}[[T]]$ generated by $1$ and finite products of terms $\frac{\mathbb{L}^{-\nu_i}T^{m_i}}{1 - \mathbb{L}^{-\nu_i}T^{m_i}}$ with $(m_i, \nu_i) \in \ZZ_{>0} \times \ZZ$.

Sometimes we omit  $\Delta$ and $\nu$ from the notation. In the case $A=J$, we use the simpler notation $Z_{X, E, \Delta, \nu}(T)$ or $Z_{X, E}(T)$. 

In what follows, we will also use the notation $$A_i:= \mathbb{L}^{-\nu_i}T^{m_i}$$ for any $i \in J$. 
\end{definition}

\begin{remark}\label{rmk1}The factors $A_i(1-A_i)^{-1}$ in (\ref{defnmzf}) are well-defined since $m_i > 0$ for all $i$.\end{remark}

Before stating and proving the main result of this section, let us discuss the relation between the motivic zeta function associated to an infinite cyclic cover of finite type  and the motivic infinite cyclic cover introduced in \cite{GVLM}. First, let us recall the definition of the motivic infinite cyclic cover: 

\begin{definition}[\cite{GVLM}]\label{micc} For each fixed subset $A \subseteq J$, we define the corresponding 
{\it motivic infinite cyclic cover} (of finite type) of $T^*_E$ by
\begin{equation}\label{defnmotinfcover}S^A_{X, E, \Delta} :=  \sum_{\substack{\emptyset \not = I \subseteq J\\ A \cap I \ne \emptyset}} (-1)^{|I|-1}[\widetilde{E^\circ_I}, \sigma_I](\mathbb{L}-1)^{|I| -1} \in K_0({\rm Var}^{\hat{\mu}}_{\CC}).
\end{equation}
As before, when $A=J$, we use the notation $S_{X, E, \Delta}$ or $S_{X, E}$.
\end{definition}

\begin{remark}By constrast with Definition \ref{maindef}, the definition of motivic infinite cyclic cover  does not make any extra assumption  on the holonomy map.  The reason we make such restrictions in Definition \ref{maindef} is to be able to recover the motivic infinite cyclic cover $S^A_{X, E, \Delta}$ from $Z^A_{X,E,\Delta, \nu}(T)$.\end{remark}

There exists a unique $\mathcal{M}^{\hat{\mu}}_\mathbb{C}$-linear morphism
\begin{equation}\label{deflim}\lim_{T \to + \infty} : \mathcal{M}^{\hat{\mu}}_\mathbb{C}[[T]]_{{\rm sr}} \rightarrow \mathcal{M}^{\hat{\mu}}_\mathbb{C}\end{equation}
such that \begin{equation}\lim_{T \to + \infty} \prod_{i \in I} \frac{\mathbb{L}^{-\nu_i}T^{m_i}}{1 - \mathbb{L}^{-\nu_i}T^{m_i}} = (-1)^{|I|},
 \end{equation} when $(m_i, \nu_i) \in \mathbb{Z}_{>0} \times \mathbb{Z}$ (see \cite[Section 2.8]{GLM} or  \cite[Lemma 4.1.1]{DenefLoeserJAG}). This morphism allows to relate the infinite cyclic zeta function and the motivic infinite cyclic cover as follows 
\begin{equation}\label{lzs}{S}^A_{X,E,\Delta} =  - \lim_{T \to + \infty} Z^A_{X, E, \Delta, \nu}(T) \in \mathcal{M}^{\hat{\mu}}_\mathbb{C}.\end{equation}
 
\begin{remark}\label{lznes} Note that the motivic infinite cyclic cover forgets all information about the log-discrepancy $\nu$. However, equation (\ref{lzs}) does not hold if one allows the holomony map to take non-positive values. Indeed, if $m_i<0$ then we have that $\lim_{T \to + \infty}  A_i (1 - A_i)^{-1} =0$, and if $m_i=0$ then $\lim_{T \to + \infty}  A_i (1 - A_i)^{-1} =A_i (1 - A_i)^{-1}$. \end{remark}

The main result of this section is the following. 
\begin{theorem}\label{Welldefined} The above notion of motivic infinite cyclic zeta function is an invariant of $(T^*_{X,E},\Delta, \nu)$, i.e., it is preserved under the equivalence relation of Definition \ref{equivnbhdld}.
\end{theorem}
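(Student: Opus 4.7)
The strategy is to mirror the proof of \cite[Theorem 3.7]{GVLM} for the invariance of the motivic infinite cyclic cover $S_{X,E,\Delta}$, now tracking the additional log-discrepancy weights $\mathbb{L}^{-\nu_i}T^{m_i}$. By the weak factorization theorem for birational maps, combined with the requirement in Definition \ref{equivnbhd} that $\Phi: X_1 \dashrightarrow X_2$ be biregular on the punctured neighborhoods, I would reduce to showing invariance under a single blow-up $p: X' = Bl_Z X \to X$ along a smooth center $Z \subset E$ of codimension $\geq 2$ in $X$, having normal crossings with the components of $E$. The compatibility of the holonomies is then furnished by Example \ref{blowupholomony} and equation (\ref{blowupholomonyeqn}), while the compatibility of the log-discrepancies is imposed precisely by the blow-up relation (\ref{eleprocld}).

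Let $K \subseteq J$ denote the maximal subset such that $Z \subseteq E_K$, and let $c$ be the codimension of $Z$ in $E_K$. The key input that governs the comparison is the multiplicative relation
$$A_* \;=\; \mathbb{L}^{-\nu_*}T^{m_*} \;=\; \mathbb{L}^{-c}\prod_{i\in K} A_i,$$
obtained by combining $m_* = \sum_{i\in K} m_i$ from (\ref{blowupholomonyeqn}) with $\nu_* = \sum_{i\in K}\nu_i + c$ from (\ref{eleprocld}). The strata of $X'$ split into (i) those disjoint from $E_*$, whose contributions to $Z^A_{X',E',\Delta',\nu'}(T)$ match those of the corresponding strata on $X$ away from $Z$ via pullback, and (ii) those meeting $E_*$, which fiber via the projective bundle $E_* \cong \mathbb{P}(\nu_Z)$ over the intersections $Z \cap E_I^\circ$ for $I \subseteq K$. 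Combining the Zariski-local triviality of this projective bundle with the displayed relation for $A_*$, and a careful analysis of the induced $\hat\mu$-equivariant covers $\widetilde{E_I'^\circ}$ as in \cite[Section 3]{GVLM}, the two sides may be matched term by term.

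The main obstacle is a rational function identity in $\mathcal{M}^{\hat\mu}_{\CC}[[T]]_{\rm sr}$ that collapses the sum of contributions from the new strata on $X'$ (those whose index set contains $*$) onto the contributions from the strata $E_I^\circ$ with $I \subseteq K$ on $X$ that meet $Z$. This identity is the rational-function enhancement of the combinatorial identity already verified in \cite{GVLM}; its proof exploits the multiplicative relation $A_* = \mathbb{L}^{-c}\prod_{i\in K}A_i$, a geometric series expansion, and the projective-bundle class decomposition $[\mathbb{P}^{|K|+c-1}] = 1 + \mathbb{L} + \cdots + \mathbb{L}^{|K|+c-1}$. Compatibility of the $\hat\mu$-actions on the unramified covers $\widetilde{E_I^\circ}$ and $\widetilde{E_I'^\circ}$ was already handled at the level of $S_{X,E,\Delta}$ in \cite{GVLM} and carries over unchanged, since the added weights $\mathbb{L}^{-\nu_i}T^{m_i}$ do not disturb the underlying geometry of the covers. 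As a final consistency check, applying $\lim_{T\to+\infty}$ as in (\ref{lzs}) should recover \cite[Theorem 3.7]{GVLM}.
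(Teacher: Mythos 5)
Your overall strategy matches the paper's: reduce via weak factorization to a single blow-up along a smooth center $Z\subseteq E$ having normal crossings with $E$, pull back the holonomy and log-discrepancy via (\ref{blowupholomonyeqn}) and (\ref{eleprocld}), and recognize that everything hinges on the multiplicative relation $\prod_{i\in K}A_i = \mathbb{L}^{c}A_*$ (where $c=\mathrm{codim}_{E_K}Z$). This is exactly the engine driving the paper's Proposition~\ref{ind}. You also correctly note that the equivariance of the unramified covers and the matching away from $Z$ carry over from \cite{GVLM} unchanged.

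However, there is a structural gap in how you propose to "match the two sides term by term." When $Z$ is positive-dimensional, it is itself stratified: beyond the open dense piece $Z\cap E^\circ_K$, there are deeper strata $Z\cap E^\circ_{K\cup K'}$ where $Z$ meets components of $E$ that are \emph{transversal} to $Z$ but not containing it. The corresponding strata in $E_*$ whose index set meets these transversal components do \emph{not} collapse onto the open stratum of $Z$ by the bundle argument alone; the paper handles them by an explicit induction on $\dim Z$, using the induction hypothesis applied to the blow-up inside the transversal slices $E_{K'}$, together with inclusion-exclusion. Your proposal mentions "strata meeting $E_*$" fibering over $Z\cap E^\circ_I$ for $I\subseteq K$, but that bookkeeping is incomplete — you need $I$ ranging over subsets of the transversal index set as well, and the recursion is what lets you peel those off. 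Without it, the claimed "term-by-term matching" is not justified.

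Second, the decisive algebraic step is \emph{not} a geometric-series manipulation. The paper works inside the subring $\mathcal{M}^{\hat\mu}_{\CC}[[T]]_{\mathrm{sr}}$ treating each $A_i/(1-A_i)$ as a closed-form rational expression, and the identity that collapses the exceptional contributions onto the $Z$-contribution is a finite polynomial identity (Lemma~\ref{auxlemA}):
$$\prod_{i=1}^k(1-A_i)+\sum_{\emptyset\ne G\subseteq\{1,\dots,k\}}\prod_{i\in G}A_i\prod_{j\notin G}(1-A_j)=1.$$
Combined with the stratification of $\mathbb{P}^r$ from Lemma~\ref{strat} and the relation $\prod A_i=\mathbb{L}^{d-k+1}A_*$, this gives the exact cancellation you need after clearing denominators. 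Your sketch gestures at "a rational function identity" and "$[\mathbb{P}^{|K|+c-1}]=1+\mathbb{L}+\cdots$," which are in the right direction, but stating and proving the identity above is the real content; a proof that omits it hasn't actually established Proposition~\ref{ind}. Fill in both the induction on $\dim Z$ (to isolate the open stratum contribution) and the explicit combinatorial identity, and your argument becomes the paper's.
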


The proof of Theorem \ref{Welldefined} is similar to (and relies on) that of \cite[Theorem 3.7]{GVLM}, and we will use freely facts proved in loc. cit. In fact, since any birational map $X_1 \rightarrow X_2$ providing an equivalence
between punctured neighborhoods (cf. Definition \ref{equivnbhd}) is, 
by the Weak Factorization Theorem \cite{AKMW} (see also \cite{Bon} for the non-complete case), 
a composition of blow-ups and blow-downs, each inducing an equivalence 
between the corresponding punctured neighborhoods,
{\it it suffices to show that the above expression (\ref{defnmzf}) is invariant under blowing up along a smooth center in $E$}. Let us consider $$p : X' := Bl_Z X \rightarrow X$$ the blow-up  of $X$ along the smooth center $Z \subset E$ of codimension $\geq 2$ in $X$. Denote by $E_*$  the exceptional divisor of the blow-up $p$, which is isomorphic to the projectivized normal bundle over $Z$, i.e.,  $E_* \cong \mathbb{P}(\nu_Z)$. We may also assume that the center $Z$ of the blow-up  
is contained in $E$ and 
has normal crossings with the components of $E$ (cf. \cite[Theorem 0.3.1,(6)]{AKMW}).  Let us denote as before the preimage  of the divisor $E_i$ in $X'$ by $E'_i$. Denote by $E'$ the normal crossing divisor in $X'$ formed by the $E'_i$ together with $E_\ast$. Let $J' = J \cup \{ \ast \}$ be the family of indices of the divisor $E'$. For $I \subseteq J$ we denote by $I' \subseteq J'$  the family $I \cup \{ \ast \}$.  Finally, let $A' = A \cup \{\ast\}$.

By the above reduction to the normal crossing situation, we may assume that there is $I \subseteq J$ maximal such that $Z$ is contained in $E_I$. We 
consider the (surjective) homomorphism given by  
the composition $$\Delta' : \pi_1({T}^*_{X', E'}) 
\rightarrow \pi_1(T^*_{X,E}) 
\buildrel \Delta \over
\rightarrow \mathbb{Z},$$ 
resulting from the identification $T^*_{X'E'} \overset{\cong}{\rightarrow} T^*_{X,E}$ 
induced by the blowing down map, as in Example \ref{blowupholomony}. We also consider the log discrepancy $\nu'$ with respect to  $(X',E')$ obtained from $\nu$ by the blow-up relation  
(\ref{eleprocld}). Note that $\widetilde{T}^{*}_{X', E',\Delta'}$ is of finite type since $\widetilde{T}^*_{X,E,\Delta}$ is so and $T^*_{X',E'} \cong T^*_{X,E}$. Moreover, by Lemma 3.2 in \cite{GVLM}  and Definition \ref{maindef} applied to $(X',E', \Delta',\nu')$, we can define the corresponding motivic infinite cyclic zeta function by:
\begin{equation}\label{afterbl}Z^{A'}_{X', E', \Delta', \nu'}(T) := \sum_{\substack{\emptyset \not = K \subset J' \\ K \cap A' \ne \emptyset}} [\widetilde{E^\circ_K}, \sigma_{K}](\mathbb{L}-1)^{|K| -1}\prod_{k\in K} \frac{\LL^{-\nu_k}T^{m_k}}{1-\LL^{-\nu_k}T^{m_k}}.\end{equation}
We will also sometimes denote $\LL^{-\nu_\ast}T^{m_\ast}$ by $A_\ast$. 

\begin{remark}\label{rafterbi} The factor $A_\ast (1 - A_\ast)^{-1}$ in (\ref{afterbl}) is well-defined because $m_i > 0$ for all $i$, and therefore $m_\ast = \sum_{i \in I} m_i >0$. \end{remark}

\medskip

 Theorem \ref{Welldefined} follows now from the following proposition. 
\begin{proposition}\label{ind} With the above notations, we have the following identification:
\begin{equation}\label{S=S}Z^A_{X, E,\Delta, \nu}(T)  = Z^{A'}_{X', E',\Delta', \nu'}(T) \in \mathcal{M}^{\hat{\mu}}_\mathbb{C}[[T]]_{{\rm sr}}.\end{equation}
\end{proposition}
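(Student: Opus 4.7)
The plan is to verify (\ref{S=S}) by matching both sides stratum by stratum, enhancing the argument of \cite[Theorem 3.7]{GVLM} to account for the rational factors $A_k/(1-A_k)$. Write $d := \codim_X Z$, so that $E_* \cong \PP(\nu_Z)$ is a $\PP^{d-1}$-bundle over $Z$, and note that $c = \codim_{E_I} Z = d - |I|$. The key geometric input is that $p$ is an isomorphism away from $Z$; the key algebraic input is the blow-up identity $A_* = \LL^{-c}\prod_{i \in I}A_i$, equivalent to combining $m_* = \sum_{i \in I}m_i$ with $\nu_* = \sum_{i \in I}\nu_i + c$.

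First, I would split
\begin{equation*}
Z^{A'}_{X',E',\Delta',\nu'}(T) = (\mathrm{I}) + (\mathrm{II})
\end{equation*}
according to whether the exceptional index $*$ appears, so that $(\mathrm{I})$ ranges over $K \subseteq J$. For $K \not\supseteq I$, maximality of $I$ forces $E^\circ_K \cap Z = \emptyset$, so $p$ induces an equivariant isomorphism between the corresponding strata (and covers) in $X'$ and $X$, and the term matches exactly its counterpart in $Z^A_{X,E,\Delta,\nu}(T)$. For $K \supseteq I$, $p$ restricts to an equivariant isomorphism between the stratum in $X'$ and $E^\circ_K \setminus Z$ in $X$, and scissor additivity $[\widetilde{E^\circ_K}, \sigma_K] = [\widetilde{E^\circ_K \setminus Z}, \sigma_K] + [\widetilde{E^\circ_K \cap Z}, \sigma_K]$ reduces (\ref{S=S}) to the corrected identity
\begin{equation}\label{corrid}
(\mathrm{II}) = \sum_{\substack{K \supseteq I \\ K \cap A \ne \emptyset}} [\widetilde{E^\circ_K \cap Z}, \sigma_K](\LL-1)^{|K|-1}\prod_{k \in K}\frac{A_k}{1-A_k}.
\end{equation}

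To compute $(\mathrm{II})$, I would parameterize $K \subseteq J$ as $K = L \sqcup M$ with $L = K \cap I$ and $M = K \setminus I$. Via the $\PP^{d-1}$-bundle structure of $E_* \to Z$, the stratum of $E'$ in $X'$ indexed by $K \cup \{*\}$ is a locally trivial fibration over the substratum $Z \cap E^\circ_{I \cup M}$ of $Z$, whose fiber is $\PP^{d-1-|L|}$ minus $|I|-|L|$ hyperplanes in general position (obtained by intersecting with the hyperplane sub-bundles $E'_i \cap E_*$ for $i \in L$ and removing them for $i \in I \setminus L$). Compatibility of the $\hat{\mu}$-covers throughout this fibration is ensured by \cite[Lemma 3.2]{GVLM} together with the identity $m_{K \cup \{*\}} = m_{I \cup M}$ (which follows from $m_* = \sum_{i \in I}m_i$). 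Summing over $L \subseteq I$ and using the standard motivic class of $\PP^n$ minus $s$ general-position hyperplanes, (\ref{corrid}) reduces to a purely formal identity in $\mathcal{M}^{\hat{\mu}}_\CC[[T]]_{{\rm sr}}$ whose verification amounts to expanding a geometric series and substituting $A_* = \LL^{-c}\prod_{i \in I}A_i$.

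The main obstacle, as in \cite{GVLM}, is the careful bookkeeping of the $\hat{\mu}$-actions through the projective-bundle decomposition (the action is trivial along fibers of $E_* \to Z$ by construction of $\Delta'$) and the matching of the resulting double sum over $L$ and $M$ with the grouping $N = I \cup M$ on the left-hand side of (\ref{corrid}). Once the fibration and inclusion--exclusion computations are in place, the final algebraic identity collapses the $\sum_L$-sum (which involves $A_*/(1-A_*)$ and a geometric series in $\LL$) into the single product $\prod_{i \in I}A_i/(1-A_i)$, and this collapse is precisely what the blow-up formulas $m_* = \sum m_i$ and $\nu_* = \sum \nu_i + c$ are designed to produce.
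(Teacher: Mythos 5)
Your strategy is sound and genuinely different in organization from the paper's. The paper proceeds by induction on $\dim Z$: after setting up the blow-up, it invokes the proposition for the smaller-dimensional centers $Z \cap E_K$ inside the transversal components $E_K$ (the relation $(\ast_K)$) to eliminate all terms coming from positive-codimension strata of $Z$, and only then reduces to the open-dense contribution, equation (\ref{finaleq2}), which it resolves via the class formulas $[\widetilde{L_G}] = [\widetilde{Z^\circ}]\LL^{d-k+1}(\LL-1)^{k-|G|-1}$ and $[\widetilde{L_I}] = [\widetilde{Z^\circ}](\LL^{d-k}+\cdots+1)$ and Lemma \ref{auxlemA}. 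You instead split $Z^{A'}_{X',E',\Delta',\nu'}(T)$ directly into terms with and without the index $\ast$, match the latter via $p$ being an isomorphism off $Z$ together with scissor additivity, and then resolve the $\ast$-terms at once by the double parametrization $K = L \sqcup M$, appealing to the projective-bundle stratification for each fixed $M$. This avoids the induction entirely and treats the cases $Z = E_I$ and $Z \subsetneq E_I$ uniformly (the $L=I$ term with $\PP^{c-1}$-fiber vanishes automatically when $c=0$); the final collapse to $\prod_{i\in I}A_i = \LL^c A_\ast$ via Lemma \ref{auxlemA} is then exactly the same computation as the paper's. What your approach buys is a cleaner, non-recursive bookkeeping; what it costs is that the fibration/cover-lifting facts must be invoked over the smaller bases $Z \cap E^\circ_{I\cup M}$ rather than only over $Z^\circ$.

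One concrete error you should fix: the claimed identity $m_{K\cup\{\ast\}} = m_{I\cup M}$ is false in general. For instance with $I=\{1,2,3\}$, $M=\emptyset$, $L=\{1\}$, and $(m_1,m_2,m_3)=(2,1,1)$ one gets $m_\ast=4$ and $m_{\{1,\ast\}} = \gcd(4,2)=2$, whereas $m_I=1$. (Equality does hold when $L = I$, but not for proper $L\subset I$.) This does not sink the argument, because the lifting of the Zariski-locally-trivial $\PP^{d-1-|L|}$-minus-hyperplanes fibration to the $\hat\mu$-covers --- the content of \cite[Lemmas 3.1, 3.2]{GVLM}, and exactly what is used in the paper's proof as well --- does not require equality of the covering group orders; the induced fiber of the lifted fibration is a finite cyclic cover of $\CC^c\times(\CC^*)^{|I|-|L|-1}$, which happens to be isomorphic to the same torus, and this is what produces the needed class identity $[\widetilde{E'^\circ_{K\cup\{\ast\}}}] = [\widetilde{Z\cap E^\circ_{I\cup M}}]\cdot\LL^c(\LL-1)^{|I|-|L|-1}$. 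So replace the false $\gcd$ claim with the correct citation to the cover-lifting lemma, and the proof goes through.
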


\bigskip

Note that we can always restrict the comparison of  zeta functions in Proposition \ref{ind} to strata in the center of blowup and in the exceptional divisor, respectively. Indeed, the blow-up map induces an isomorphism outside the center $Z$, so the strata in $E\setminus Z$ and $E' \setminus E_*$ are in one-to-one isomorphic correspondence; moreover, these isomorphisms can be lifted (e.g., by Lemma 3.2 in \cite{GVLM}) to the corresponding unramified covers. It also suffices to prove the above result only in the case $A=J$.

The proof of Proposition \ref{ind} is by induction on the dimension of the center of blow-up.


\subsection{Beginning of induction}

Let us consider the following examples  in relation to the starting case of induction, i.e., when the center $Z$ is a point.

\begin{example}\label{A}
Let  $X$ be a surface and let $E_1$ and $E_2$ be two smooth curves  intersecting transversally at a point $P$. Let us consider the blow-up 
$X'=Bl_Z X$ of $X$ at the center $Z=P$. The exceptional divisor is $E_* \cong \PP^1$ and we have  $E_*^\circ \cong \mathbb{C}^*$. 
Let $\delta_{i} \in H_1(T_{E_i^{\circ}}^*,\ZZ)$ ($i=1,2$) be the class of the fiber of the
projection of punctured neighborhood $T_{E_i^{\circ}}^*$ onto the stratum $E_i^{\circ}$.
If $\Delta(\delta_{1})= m_1$,  $\Delta(\delta_{2})=m_2$, $\nu(\delta_{1})=\nu_1$, $\nu(\delta_2)=\nu_2$,  and we let $m= {\rm gcd}(m_1, m_2)$,  then the contribution of $P$ to $Z_{X, E}(T)$ is 
$$[\mu_m](\mathbb{L}-1)\frac{A_1}{1- A_1}\frac{A_2}{1 - A_2},$$ 
and the contributions of the exceptional divisor $E_*$ to $Z_{X', E'}(T)$ are
$$\frac{A_\ast}{1 - A_\ast} \left ( (\LL-1) \left ( [\widetilde{E'_1 \cap E_*}]\frac{A_1}{1 - A_1}  + [\widetilde{E'_2\cap E_*}]\frac{A_2}{1 - A_2} \right ) + [\widetilde{E_*^\circ}] \right ).$$ 
Because $m = {\rm gcd}(m_1, m_1 + m_2) = {\rm gcd}(m_2, m_1 + m_2)$, we get the following equalities:  $[\widetilde{E'_1 \cap E_*}, \sigma_{\Delta'}] = [\widetilde{E'_2\cap E_*}, \sigma_{\Delta'}] = [\mu_m]$. Finally, it follows from Lemma 3.2 in \cite{GVLM}  that $[\widetilde{E_*^\circ}, \sigma_{\Delta'}]=[\mu_m](\mathbb{L}-1)$, see  \cite[Example 3.9]{GVLM} for more details. Hence, after factoring out  $[\mu_m](\LL-1)$, it remains to show that the two 
 contributions 
 $$\frac{A_1}{1- A_1}\frac{A_2}{1 - A_2}
\quad {\rm  and } \quad 
 \frac{A_\ast}{1 - A_\ast} \left [ \left (\frac{A_1}{1 - A_1}  + \frac{A_2}{1 - A_2} \right ) + 1 \right ]$$ 
 coincide.
After reducing to the common denominator $(1-A_1)(1-A_2)(1 - A_\ast)$, we just need to check that 
 $$A_1 A_2 ( 1 - A_\ast) =  A_1 ( 1 - A_2) A_\ast +  ( 1 - A_1) A_2 A_\ast + A_\ast (1-A_1)(1-A_2),$$
 which further simplifies to $A_1A_2= A_\ast$. The latter claim holds since in this case $m_\ast = m_1 + m_2$ and $\nu_\ast = \nu_1 + \nu_2$ (here $\codim_{E_I}Z=0$). 

Note that in the case when $P$ belongs to only one irreducible component, say $E_1$,  we have  $[\widetilde{E}^\circ_1 |_P]= [\mu_{m_1}]$   and $[\widetilde{E_*^\circ}, \sigma_\Delta] = [\mu_{m_1}] \mathbb{L}$. In this case, the contributions of $P$ and resp. $E_*$ to $Z_{X, E}(T)$  and resp. $Z_{X', E'}(T)$ are $$[\mu_{m_1}]\frac{A_1}{1 - A_1} \quad {\rm and \ resp.} \quad [\mu_{m_1}](\mathbb{L} -1)\frac{A_1}{1 - A_1} \frac{A_\ast}{1 -A_\ast} + [\mu_{m_1}] \mathbb{L} \frac{A_\ast}{1 - A_\ast}.$$
Again, after factoring out $[\mu_{m_1}]$ and reducing to common denominator, we need to check that  
$$A_1 (1 - A_\ast) = (\LL-1)A_1 A_\ast + \LL (1 - A_1) A_\ast,$$
which simplifies to $A_1= \LL A_\ast$. The latter claim holds since  $m_\ast=m_1$ and $\nu_\ast= \nu_1 + 1$ (here $\codim_{E_I}Z=1$). \hfill$\square$\end{example}

\begin{example}\label{B} Let $X$ be a threefold and $E_1$, $E_2$, $E_3$ be three divisors 
intersecting transversally at a point $P$. Consider the blow-up $X'$ of 
$X$ at the center $Z=P$, so here $I=J=\{1,2,3\}$. The divisor $E=\sum_{i=1}^3 E_i$ of $X$ transforms into the divisor 
$E'$ in $X'$ consisting of  the proper transforms $E'_i$ of the irreducible 
components $E_i$ of  $E$ ($i=1,2,3$),  
together with the exceptional component $E_*\cong \PP^2$. 
As already mentioned, it suffices to restrict the comparison of  $Z_{X,E}(T)$ and $Z_{X',E'}(T)$ only to contributions coming from the strata in the center of blow-up and the exceptional divisor, respectively. 
Let $m_I:=\gcd(m_1,m_2,m_3)$. Then 
 the contribution of $P$ to  $Z_{X,E}(T)$ is 
$$[\mu_{m_I}](\LL - 1)^2 \frac{A_1}{1 - A_1} \frac{A_2}{1 -A_2}\frac{A_3}{1 - A_3}.$$
The exceptional divisor $E_*$ acquires seven strata induced from the stratification of $E'$. These strata are: 
\begin{itemize}
\item $L_{\{i,j\}}=E_* \cap E'_i \cap E'_j$, for $i,j \in \{1,2,3\}$ with $i \neq j$, 
\item  $L_{\{i\}}=(E_* \cap E'_i) \setminus (L_{\{i,j\}} \cup L_{\{i,k\}})$, with $\{i,j,k\}=\{1,2,3\}$,  
\item $E_*^{\circ}=E_*\setminus \bigcup_{i=1}^3 E'_i$.
\end{itemize}
Note that  the strata $E_*^{\circ}$ and $L_{\{i\}}$ are complex tori of dimension 
$2$ and $1$, respectively, while the strata $L_{\{i,j\}}$ are points.

It follows from Lemma 3.2 of \cite{GVLM} 
that
for each of seven strata of $E_*$, the corresponding unbranched covers appearing in (\ref{defnmzf})  
 have $m_I=\gcd(m_1,m_2,m_3)$ components, each of which is biregular to the stratum itself (since all these strata are tori), see Example 3.6 in \cite{GVLM} for more details. 
Hence the contribution of $E_*$ to $Z_{X',E'}(T)$ is:
\begin{equation*}\begin{split}
[\mu_{m_I}] & (\mathbb{L}-1)^2 \frac{A_\ast}{1 - A_\ast} \left ( 1 + \frac{A_1}{1 - A_1} + \frac{A_2}{1 - A_2} + \frac{A_3}{1 - A_3}  \right . \\
& \left . + \frac{A_1}{1 - A_1} \frac{A_2}{1 - A_2}+ \frac{A_1}{1 - A_1} \frac{A_3}{1 - A_3} + \frac{A_2}{1 - A_2} \frac{A_3}{1 - A_3} \right).
\end{split}
\end{equation*}
After we factor out $[\mu_{m_I}](\mathbb{L}-1)^2$ and  reduce to common denominator, we need to check that $A_1 A_2 A_3 (1 - A_\ast)$ is equal to 
\begin{equation*}\begin{split}
A_\ast & \Big [ (1 - A_1) (1 - A_2) (1 - A_3) +  A_1 (1 - A_2) (1 - A_3) +  (1 - A_1) A_2 (1 - A_3) \\ 
&+  (1 - A_1) (1 - A_2) A_3 + A_1 A_2 (1 - A_3) +  A_1 (1-A_2) A_3 +  (1 - A_1) A_2 A_3 \Big ],
\end{split}
\end{equation*}
which further reduces to showing that $A_1 A_2 A_3 = A_\ast$. This last equality holds because $m_\ast = m_1 + m_2 + m_3$ and $\nu_\ast = \nu_1 + \nu_2 + \nu_3$ (here $\codim_{E_I}Z=0$). \hfill$\square$
\end{example}

\begin{example}\label{C} Let $X$ be a threefold, and $E=E_1+E_2$ be a simple normal crossing divisor on $X$, with holonomy values $m_1$ and resp. $m_2$ on the meridians about its irreducible components. Let $m=\gcd(m_1,m_2)$. Choose a point $Z$ contained in the (one-dimensional) intersection $E_J:=E_1 \cap E_2$, for 
 $J=I=\{1,2\}$, and consider the blow-up $X'=Bl_Z X$ of $X$ along the center $Z$.  We denote the exceptional divisor $\PP(\nu_Z)$ by $E_*$.  The divisor $E$ is transformed under the blow-up into the divisor $E'$ in $X'$ consisting of the proper transforms $E'_i$ ($i \in J$) of the irreducible components $E_i$ of $E$, together with the exceptional divisor $E_*\cong \PP^2$.
 
Let us explicitly describe the contribution of the center $Z$ and that of the exceptional divisor $E_*$ to the zeta functions $Z_{X,E}(T)$ and $Z_{X',E'}(T)$, respectively. 
 Clearly, the class $[\widetilde{E_J}|_Z,\sigma_{\Delta}]$ equals $[\mu_m]$. So the contribution to $Z_{X,E}(T)$ consists of 
 $$[\mu_m](\LL-1)\frac{A_1}{1 - A_1} \frac{A_2}{1 - A_2}.$$
 On the other hand, the exceptional divisor $E_*$ acquires four strata induced from the stratification of $E'$, namely,
 \begin{itemize}
 \item $L_J=E'_1 \cap E'_2 \cap E_*$, which is just a point.
 \item $L_{\{i\}}=E_* \cap E'_i \setminus L_J \cong \CC$, for $i \in J$.
 \item $E^{\circ}_*=E_* \setminus (E_1' \cup E_2') \cong \CC \times \CC^*$.
 \end{itemize}
 Note that, since any of the four strata in $E_*$ is either simply-connected or a product of a simply-connected space with a torus, any finite connected unbranched cover of such a stratum is biregular to the stratum itself. Moreover, as shown in \cite[Example 3.11]{GVLM}, for each of the four strata of $E_*$, the corresponding unbranched cover appearing in (\ref{defnmzf}) has exactly $m$ connected components. 
 So the contribution of $E_*$ to  $Z_{X',E'}(T)$ is given by:
\begin{equation}\label{pos}[\mu_m](\LL-1)\frac{A_\ast}{1 - A_\ast} 
  \left[ \LL  + \LL  \left ( \frac{A_1}{1 - A_1} +  \frac{A_2}{1 - A_2}  \right) + (\LL-1) \frac{A_1}{1 - A_1} \frac{A_2}{1 - A_2}  \right].\end{equation}
After factoring out $[\mu_m](\LL-1)$ out and reducing to common denominator, it remains to check the equality
\begin{multline*}
A_1 A_2 (1 - A_\ast) = A_\ast \left[ \LL  (1 - A_1) (1 - A_2) +  \LL  \big(A_1 (1 - A_2) +  (1 - A_1) A_2 \big) +  (\LL - 1)   A_1 A_2 \right],\end{multline*}
which simplifies to $A_1 A_2 = \LL A_\ast$.  The latter equality holds because $m_\ast = m_1 + m_2$ and $\nu_\ast = \nu_1 + \nu_2 + 1$ (here $\codim_{E_I}Z=1$). \hfill$\square$
\end{example}

\begin{example} Let $X$ be a threefold, and $E= E_1$ a (simple normal crossing) divisor on $X$, with holonomy value $m:=m_1$.
Choose $Z$ a point contained in $E$ and consider the blow-up $X' = Bl_Z X$ of $X$ along $Z$.  Denote by  $E_\ast$ the exceptional divisor which is isomorphic to  $\PP^2$ and by $E'$ the proper transform of $E$. In this case, the exceptional divisor $E_\ast$ has two strata, namely, $E^\circ_\ast =E_* \setminus E' \cong \LL^2$ and $L_1=E_* \cap E'  \cong \PP^1$. 
In particular, any finite connected unbranched cover of such a stratum is biregular to the stratum itself. And it can be easily seen that for each of the two strata of $E_*$, the corresponding branched covers have exactly $m$ connected components. Thus, 
the contributions of $Z$ to $Z_{X,E}(T)$, and resp. of $E_\ast$ to $Z_{(X', E')}(T)$ are 
$$[\mu_{m}] \frac{A_1}{1 - A_1},  \quad {\rm resp.} \quad 
[\mu_{m}] \LL^2 \frac{A_\ast}{1 - A_\ast} + [\mu_{m}] (\LL^2 - 1) \frac{A_1}{1 - A_1} \frac{A_\ast}{1 - A_\ast}.$$
After we factor out $[\mu_m]$ and reduce to common denominator, it remains to show that 
$$A_1( 1 - A_\ast) = \LL^2 (1 - A_1) A_\ast + (\LL^2 - 1) A_1 A_\ast,$$
or equivalently $A_1 = \LL^2 A_\ast$, which follows from $m_\ast = m$ and $\nu_\ast = \nu_1 + 2$ (here $\codim_{E_I}Z=2$). 
\end{example}

Let us now prove the beginning  case of induction for Proposition \ref{ind}. 
\begin{proposition} \label{pt}
The assertion of Proposition \ref{ind} holds in the case when the center of blow-up $Z$ is zero-dimensional.
\end{proposition}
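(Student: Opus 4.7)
\textbf{Proof plan for Proposition \ref{pt}.} Write $Z = \{P\}$, let $I \subseteq J$ be the maximal subset with $P \in E_I$, and set $k := |I|$ and $c := \codim_{E_I} Z = r+1-k$; note that $E_* \cong \PP^r$. Since $p$ is an isomorphism outside $Z$, the only strata affected by the blow-up are $E^\circ_I$ itself (which contains $P$ in its interior, $I$ being maximal) and the new strata inside $E_*$. After cancelling the unchanged contributions and noting that $[\widetilde{E^\circ_I}|_P] = [\mu_{m_I}]$, the identity (\ref{S=S}) reduces to the local equation
\begin{equation*}
[\mu_{m_I}](\LL-1)^{k-1}\prod_{i\in I}\frac{A_i}{1-A_i} = \sum_{K_0\subseteq I}[\widetilde{E^{\prime\circ}_{\{*\}\cup K_0}}](\LL-1)^{|K_0|}\frac{A_*}{1-A_*}\prod_{i\in K_0}\frac{A_i}{1-A_i},
\end{equation*}
in which only $K_0 \subseteq I$ appears on the right, since $E'_j \cap E_* = \emptyset$ for $j \notin I$.

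Next, each stratum $E^{\prime\circ}_{\{*\}\cup K_0}$ is the complement in $\bigcap_{i \in K_0}(E'_i \cap E_*) \cong \PP^{r-|K_0|}$ of the $k-|K_0|$ remaining hyperplanes, which lie in general position. By inclusion-exclusion one obtains
\begin{equation*}
[E^{\prime\circ}_{\{*\}\cup K_0}] = \begin{cases} \LL^{r-k+1}(\LL-1)^{k-|K_0|-1}, & K_0 \subsetneq I,\\ (\LL^{r-k+1}-1)/(\LL-1), & K_0 = I,\end{cases}
\end{equation*}
and by Lemma~3.2 of \cite{GVLM}, applied exactly as in Examples \ref{A}--\ref{C} (each stratum is a projective subspace or a product of an affine space with a torus), the associated unramified cover splits into $m_I$ connected components each biregular to the base stratum, so that $[\widetilde{E^{\prime\circ}_{\{*\}\cup K_0}}] = [\mu_{m_I}]\,[E^{\prime\circ}_{\{*\}\cup K_0}]$.

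To conclude, I cancel $[\mu_{m_I}]$ and use the blow-up relations (\ref{blowupholomonyeqn}) and (\ref{eleprocld}) to get $A_* = \LL^{-c}\prod_{i \in I} A_i$ with $c = r+1-k$. Setting $u_i := A_i/(1-A_i)$ and dividing the identity through by $A_*/(1-A_*)$, the left-hand side expands (using $1/(1-A_i) = 1+u_i$ and $\prod_{i\in I}(1+u_i) = \sum_{K_0 \subseteq I}\prod_{i \in K_0} u_i$) into a sum indexed by the subsets $K_0$; comparing with the right-hand side term by term reduces the whole identity to the tautologies $(\LL-1)^{k-1}\LL^{r+1-k} = \LL^{r-k+1}(\LL-1)^{k-1}$ for $K_0 \subsetneq I$ and $(\LL-1)^{k-1}(\LL^{r+1-k}-1) = (\LL^{r-k+1}-1)(\LL-1)^{k-1}$ for $K_0 = I$. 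The main obstacle is the uniform identification of the cover classes $[\widetilde{E^{\prime\circ}_{\{*\}\cup K_0}}]$ from the previous paragraph, which requires carefully tracking how the holonomy $\Delta'$ descends to each stratum of $E_*$ via Lemma~3.2 of \cite{GVLM}; once this is in place, the remaining verification is an elegant cancellation generalizing Examples \ref{A}--\ref{C}.
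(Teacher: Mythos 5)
Your proof is correct and follows essentially the same path as the paper's: reduce to the single-point case, localize to the contributions of $Z$ and $E_*$, compute the stratum classes of $E_*\cong\PP^r$ (your inclusion--exclusion formula agrees with the paper's Lemma 3.13 of \cite{GVLM}), factor out $[\mu_{m_I}]$ using Lemma 3.2 of \cite{GVLM}, and reduce the resulting rational-function identity to $\prod_{i\in I}A_i=\LL^{r+1-k}A_*$. The only cosmetic difference is that you expand in the variables $u_i=A_i/(1-A_i)$ and match coefficients subsetwise, whereas the paper clears denominators and invokes its Lemma \ref{auxlemA}; these are the same computation packaged differently.
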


It suffices to prove Proposition \ref{pt} in the case when the center of blow-up is a single point. Indeed, the blow-up at a finite number of points can be regarded as a finite number of single point blow-ups. 

We can thus assume that $Z$ is a point. Let  $r+1=\codim_X Z$, which, by our assumption, equals $\dim X$. Then the exceptional divisor is $E_* \cong \mathbb{P}^r$.   The divisor $E=\sum_{i \in J} E_i$ of $X$ transforms under the blow-up into the divisor 
$E'$ in $X'$ consisting of  the proper transforms $E'_i$ of the irreducible 
components $E_i$ of  $E$,  
together with the exceptional component $E_*$. 
It suffices to restrict the comparison of  zeta functions $Z_{X,E}(T)$ and $Z_{X',E'}(T)$ only to contributions coming from the strata in the center of blow-up and the exceptional divisor, respectively.

As in the above examples, we need to describe the stratification of $E_\ast\cong \PP^r$ induced from that of $E'$ (see (\ref{str}) for the latter). Assume that $$Z \subseteq \bigcap_{i=1}^k E_i.$$ We recall here the following result from \cite{GVLM}.

\begin{lemma}\label{strat} \cite[Lemma 3.13]{GVLM} For each $k$ with $1 \leq k \leq r+1$ we have the following identity in $K_0({\rm Var}_\CC)$:
\begin{equation}\label{excdivblowuppt}[\mathbb{P}^r] = \sum_{l=0}^{k-1} \binom{k}{l} \mathbb{L}^{r-k+1} (\mathbb{L}-1)^{k-l-1} + [\mathbb{P}^{r-k}].\end{equation}
 The right-hand side describes the stratification of the exceptional divisor $E_* \cong \mathbb{P}^r$ induced by the divisor $\sum_{i=1}^k E'_i$ consisting of the proper transforms of components of $E$ containing the center of blow-up. More precisely, by setting  $I:=\{1,\cdots, k\}$, the strata of $E_*$ are:
 \begin{itemize}
\item  $L_{I}:=(\bigcap_{i=1}^k E'_i) \cap E_*$, which is isomorphic to $\mathbb{P}^{r-k}$. 
\item $\binom{k}{l}$ strata of dimension $r-l$ and of the form $$L_G:=(\bigcap_{i \in G} E'_i) \cap E_*\setminus \bigcup_{i \in I \setminus G} E'_i,$$ with $G \subset I$ and $1 \leq |G|=l \leq k-1$, each of which is isomorphic to $\CC^{r-k+1} \times (\CC^*)^{k-l-1}$.
The  class of each such stratum in $K_0({\rm Var}_\CC)$ is equal to $\mathbb{L}^{r-k+1} (\mathbb{L}-1)^{k-l-1}$.
\item $E^{\circ}_{\ast}:=E_* \setminus \bigcup_{i =1}^k E'_i$, of dimension $r$, which is isomorphic to $\CC^{r-k+1}\times(\CC^*)^{k-1}$, and whose class in $K_0({\rm Var}_\CC)$ is $\LL^{r-k+1}(\LL-1)^{k-1}$.
\end{itemize}
\end{lemma}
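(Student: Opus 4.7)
The plan is to identify $E_* \cong \PP(T_Z X) = \PP^r$, describe the induced arrangement of hyperplanes on it coming from the proper transforms $E'_i$, and read off each stratum as an affine/projective variety whose Grothendieck class can be computed explicitly.

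Since $Z$ is a point of $X$ and each $E_i$ ($i = 1, \ldots, k$) is a smooth divisor through $Z$, the proper transform $E'_i$ of $E_i$ meets the exceptional divisor $E_*$ along $H_i := \PP(T_Z E_i)$, which is a hyperplane in $\PP(T_Z X) = E_*$. The simple normal crossing hypothesis on $E_1 + \cdots + E_k$ at $Z$ translates into general linear position of the tangent subspaces $T_Z E_i$ inside $T_Z X$, and so the hyperplanes $H_1, \ldots, H_k$ are in general linear position in $\PP^r$.

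Next I stratify $E_*$ by this hyperplane arrangement: for $G \subseteq I$ put
$$L_G := \bigcap_{i \in G} H_i \setminus \bigcup_{j \in I \setminus G} H_j.$$
By general position, if $|G| = l$ then $\bigcap_{i \in G} H_i \cong \PP^{r-l}$ (a transverse intersection), and for $l < k$ the complement $L_G$ inside this $\PP^{r-l}$ is the complement of $k-l$ general-position hyperplanes. Choosing affine coordinates so that these $k-l$ hyperplanes become $k-l$ of the coordinate hyperplanes yields
$$L_G \cong \CC^{r-k+1} \times (\CC^\ast)^{k-l-1},$$
while $L_I \cong \PP^{r-k}$ in the case $l = k$.

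Finally, counting $\binom{k}{l}$ strata of size $l$ for $0 \le l \le k-1$ and summing classes in $K_0({\rm Var}_\CC)$ produces
$$[\PP^r] \;=\; \sum_{l=0}^{k-1}\binom{k}{l}\LL^{r-k+1}(\LL-1)^{k-l-1} + [\PP^{r-k}],$$
as claimed. The identity can be verified directly from $[\PP^r] - [\PP^{r-k}] = \LL^{r-k+1}[\PP^{k-1}]$ together with $[\PP^{k-1}] = \sum_{l=0}^{k-1}\binom{k}{l}(\LL-1)^{k-l-1}$, the latter coming from $((\LL-1)+1)^k - 1 = (\LL-1)\,[\PP^{k-1}]$ via the binomial theorem. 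The only delicate point in the plan is the affine-chart identification $L_G \cong \CC^{r-k+1}\times(\CC^\ast)^{k-l-1}$, which relies essentially on the snc hypothesis to reduce the $k-l$ hyperplanes to coordinate hyperplanes; the rest of the argument is bookkeeping.
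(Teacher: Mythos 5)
Your proof is correct and matches the expected argument: identify $E_* \cong \mathbb{P}(\nu_Z) = \mathbb{P}(T_Z X) \cong \mathbb{P}^r$, observe that the proper transforms cut out $k \le r+1$ hyperplanes in general position (by the snc hypothesis), stratify by the arrangement, and compute each stratum as $\CC^{r-k+1}\times(\CC^*)^{k-l-1}$ or $\PP^{r-k}$. The verification of the closed-form identity via $[\PP^r]-[\PP^{r-k}]=\LL^{r-k+1}[\PP^{k-1}]$ and the binomial theorem is a clean check and consistent with the stratification count.
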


We also need the following easy fact:
\begin{lemma}\label{auxlemA}For a ring $R$ with unit containing the elements $A_1, \dots, A_k$ and $1 + A_1, \dots, 1+A_k$,  for any $k \in \NN$ we have that 
\begin{equation}\label{auxAi}\prod_{i=1}^k (1 - A_i) + \sum_{l=1}^k \sum_{{\substack{ G \subseteq \{1,2, \dots, k\} \\ |G| = l}}} \prod_{i\in G} A_i \prod_{j \in \{1,2, \dots, k\} \setminus G} (1 - A_j)= 1\in R\end{equation}
\end{lemma}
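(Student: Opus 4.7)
The identity is a straightforward combinatorial expansion, so the plan is to reduce it to a single telescoping observation rather than attempt induction (though induction would also work). The key idea is that for each index $i$ one can trivially write $1 = (1 - A_i) + A_i$ in the ring $R$, and therefore
\[
1 = \prod_{i=1}^{k} 1 = \prod_{i=1}^{k}\bigl((1-A_i) + A_i\bigr).
\]

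Next I would expand the right-hand side by distributivity. Since each of the $k$ factors is a sum of two terms, the expansion is indexed by subsets $G \subseteq \{1,2,\dots,k\}$, where for $i \in G$ we pick the summand $A_i$ and for $j \notin G$ we pick the summand $(1-A_j)$. This gives
\[
\prod_{i=1}^{k}\bigl((1-A_i) + A_i\bigr) \;=\; \sum_{G \subseteq \{1,\dots,k\}} \prod_{i \in G} A_i \prod_{j \in \{1,\dots,k\}\setminus G}(1-A_j).
\]
Note that distributivity and the resulting indexing by subsets only use the ring axioms; neither commutativity among the $A_i$ with themselves nor the existence of $(1+A_i)$ (listed in the hypothesis) is actually needed beyond making the statement parse, since the factors $(1-A_i)$ and $A_i$ commute pairwise within each summand by construction.

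Finally I would separate off the single term corresponding to $G = \emptyset$, which equals $\prod_{j=1}^{k}(1-A_j)$, from the remaining terms where $1 \le |G| = l \le k$, and regroup by $l$:
\[
1 \;=\; \prod_{j=1}^{k}(1-A_j) \;+\; \sum_{l=1}^{k}\ \sum_{\substack{G \subseteq \{1,\dots,k\} \\ |G| = l}} \prod_{i \in G} A_i \prod_{j \in \{1,\dots,k\}\setminus G}(1-A_j),
\]
which is exactly the identity (\ref{auxAi}). There is no real obstacle here; the only thing to double-check is that the expansion really recovers every subset $G$ exactly once, which is immediate from distributivity applied to the product of $k$ binomials.
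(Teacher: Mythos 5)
Your proof is correct and rests on the same underlying observation the paper uses: $1 = (1-A_i) + A_i$ for each $i$, then expand. The paper phrases this iteratively (letting $B(l)$ denote the left-hand side truncated to $\{1,\dots,l\}$ and checking $B(l)\bigl((1-A_{l+1})+A_{l+1}\bigr)=B(l+1)$), whereas you expand $\prod_{i=1}^k\bigl((1-A_i)+A_i\bigr)$ in one step by distributivity and read off the subset-indexed sum directly; these are the same argument, just unrolled differently, and your closed-form version is arguably cleaner.

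One small inaccuracy in your aside: you claim commutativity is not needed, but the expansion of $\prod_{i=1}^k\bigl((1-A_i)+A_i\bigr)$ by distributivity alone produces terms of the form $x_1 x_2 \cdots x_k$ with each $x_i\in\{A_i,\,1-A_i\}$ in positional order, not the regrouped products $\prod_{i\in G}A_i\,\prod_{j\notin G}(1-A_j)$ that appear in the statement; passing from the former to the latter requires commuting $A_i$ past $A_j$ or $1-A_j$ for $i\neq j$. (Indeed the grouped expression is not even unambiguously defined without some commutativity.) This does not affect the application, since the paper works in a commutative Grothendieck ring, but the aside should be dropped or corrected.
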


\begin{proof} This follows iteratively from the fact that $(1 -A_1) + A_1 = 1$ and, if we denote by  $B(k)$  the left-hand side of (\ref{auxAi}) then we have that 
$$B(l)(1 - A_{l+1}) + B(l) A_{l+1} =  B(l+1).$$ \end{proof}

\begin{proof} {(of Proposition \ref{pt})}\newline
As already pointed out, it suffices to check  the invariance (\ref{S=S}) of motivic infinite cyclic zeta function under blow-up  in the case when $Z$ is a single point. Assume $Z \subseteq \bigcap_{i=1}^k E_i$.  
Let $I=\{1, 2, \dots, k\}$ and set $m = {\gcd}(m_1,\cdots,m_k)$, where the $m_i$ are the values of the holonomy on the meridians $\delta_i$ about the components $E_i$. Clearly, the class $[\widetilde{E}_I^\circ{}_{|_Z}, \sigma_\Delta]$ equals $[\mu_m]$. Therefore, the corresponding contribution of $Z$ to the  left hand-side of (\ref{S=S}) is 
$$[\mu_m](\mathbb{L}-1)^{k-1} \prod_{i=1}^k \frac{\LL^{-\nu_i}T^{m_i}}{1 - \LL^{-\nu_i}T^{m_i}} .$$

On the other hand, for any stratum $S$ of the exceptional divisor $E_*$ (as described in Lemma \ref{strat}), the motive of the corresponding unbranched cover of Definition \ref{ucov} can be computed by $[\widetilde{S},\sigma_{\Delta'}]=[\mu_m][S,\sigma_{\Delta}]$, see the proof of Proposition 3.12 in \cite{GVLM} for more details. 

Taking into account the description of the stratification in Lemma \ref{strat} the contribution of $E_\ast$ to the right hand-side of (\ref{S=S}) is
\begin{multline*}[\mu_m] (\LL-1)^{k-1} \frac{A_\ast}{1 - A_\ast}   \cdot  \left ( \LL^{r-k+1}  + \sum_{l=1}^{k-1} \mathbb{L}^{r-k+1}  \sum_{{\substack{G \subseteq I \\
 |G| = l}}}  \prod_{i \in G}  \frac{A_i}{1 - A_i}  +
[\PP^{r-k}] (\LL - 1) \prod_{i=1}^k  \frac{A_i}{1 - A_i}  \right ).\end{multline*}
The first summand in the above expression corresponds to the stratum $E^\circ_\ast$ of dimension $r$, the summands indexed by $l$ correspond to the ${k \choose l}$ strata $L_G$ of dimension $r-l$ (with $G \subset I$, $1 \leq | G | = l \leq k-1$), while the last summand corresponds to the stratum $L_I$ of dimension $r-k$. 

After we factor out $[\mu_m](\LL - 1)^{k-1}$ and we reduce to the common denominator $(1 -A_\ast) \prod_{i=1}^k (1 - A_i)$, we are left to verify that
$\prod_{i=1}^k A_i (1 - A_\ast)$ equals  
$$A_\ast \big [ \LL^{r-k+1}  \prod_{i=1}^k (1 - A_i) + \sum_{l=1}^{k-1} \LL^{r-k+1} \sum_{{\substack{I \subseteq K \\ |I| = l}}}  \prod_{i\in I} A_i \prod_{j \in K \setminus I} (1 - A_j) + (\LL^{r-k+1} - 1) \prod_{i=1}^k A_i \big ].$$ 
The last claim follows from Lemma \ref{auxlemA}. Indeed, it can be reduced to to $\prod_{i=1}^k A_i = \LL^{r-k+1}A_\ast$, which holds because $m_\ast = \sum_{i=1}^k m_i$ and $\nu_\ast = \sum_{i=1}^k \nu_i + r-k+1$. Recall that here $\codim_{E_I}Z=r-k+1$.
\end{proof}


\subsection{Invariance of the motivic infinite cyclic covers under blowups: general case}
The main ideas of the proof of the induction step are already visible in the following example.

\begin{example}\label{D}
Let $X$ be a threefold and $E=\sum_{i\in J} E_i$, with $J=\{1,2,3\}$, be a simple normal crossing divisor. Let $Z$ be the intersection of $E_1$ and $E_2$. Set $I=\{1,2\}$, so in the notations from the introduction, we have that $Z=E_I$. The component  $E_3$ is transversal to $Z$.
Let us consider the blow-up $X'=Bl_Z X$ of $X$ along the center $Z$. As before, we denote the exceptional divisor $\PP(\nu_Z)$ by $E_*$. 

The strata in $Z$ are $E_I^{\circ}=E_1\cap E_2 \setminus E_3$ and the point $E_J=\cap_{i\in J} E_i$, so the contribution of the center $Z$ to the motivic infinite cyclic zeta function $Z_{X,E}(T)$ is: $$[\widetilde{E^{\circ}_I}] (\LL -1)\frac{A_1}{1 - A_1}\frac{A_2}{1 - A_2} + [\widetilde{E_J}](\LL -1)^2\frac{A_1}{1 - A_1}\frac{A_2}{1 - A_2}\frac{A_3}{1 - A_3}.$$
The exceptional divisor $E_*$ acquires a stratification with strata of the form:
$$L_H:=(\bigcap_{i \in H} E'_i) \cap E_*\setminus \bigcup_{i \in J \setminus H} E'_i,$$ with $H \subset J$, where the dense open stratum $E_*^{\circ}$ in $E_*$ is identified with $L_{\emptyset}$. More precisely, the strata of $E_*$ are in this case the following:
\begin{itemize}
\item $L_{\{1,3\}}=E_* \cap E'_1 \cap E'_3$, $L_{\{2,3\}}=E_* \cap E'_2 \cap E'_3$.
\item  $L_{\{1\}}=(E_* \cap E'_1) \setminus E'_3$, $L_{\{2\}}=(E_* \cap E'_2) \setminus E'_3$, $L_{\{3\}}=(E_* \cap E'_3) \setminus (E'_1 \cup E'_2)$. 
\item $E_*^{\circ}=E_*\setminus \bigcup_{i=1}^3 E'_i$.
\end{itemize}
So the contribution of the exceptional divisor $E_*$ to $Z_{X',E'}(T)$ is:
\begin{equation*}\begin{split}
\frac{A_\ast}{1 - A_\ast} 
& \left( 
[\widetilde{E_*^{\circ}}] +[\widetilde{L_{\{1\}}}](\LL-1)\frac{A_1}{1 - A_1} + [\widetilde{L_{\{2\}}}](\LL-1)\frac{A_2}{1 - A_2}  \right . \\
& + [\widetilde{L_{\{3\}}}] (\LL-1)\frac{A_3}{1 - A_3} + [\widetilde{L_{\{1,3\}}}](\LL-1)^2 \frac{A_1}{1 - A_1}\frac{A_3}{1 - A_3} \\
& \left . + [\widetilde{L_{\{2,3\}}}]\LL-1)^2 \frac{A_2}{1 - A_2}\frac{A_3}{1 - A_3} 
\right) .
\end{split}
\end{equation*}
Note that by Example \ref{A}, applied to the blow-up of the point $E_J$ of intersection of transversal curves $E_1 \cap E_3$ and $E_2 \cap E_3$ inside the surface $E_3$, we have that:
\begin{multline*}[\widetilde{E_J}](\LL -1)\frac{A_1}{1 - A_1}\frac{A_2}{1 - A_2}  =  \left ( [\widetilde{L_{\{3\}}}]  +  \big( [\widetilde{L_{\{1,3\}}}]\frac{A_1}{1 - A_1}+[\widetilde{L_{\{2,3\}}}] \frac{A_2}{1 - A_2} \big) (\LL -1) \right ) \frac{A_\ast}{1 - A_\ast} .\end{multline*}
In particular, this relation is consistent with the fact that $m_\ast= m_1 + m_2$ and $\nu_\ast = \nu_1 + \nu_2$. 

So in order to show that the contributions of $Z$ and $E_*$ to the zeta functions  $Z_{X,E}(T)$ and respectively $Z_{X',E'}(T)$ coincide, it suffices to prove the equality:
\begin{multline*}\label{pr}[\widetilde{E^{\circ}_I}] (\LL -1)\frac{A_1}{1 - A_1}\frac{A_2}{1 - A_2}  = \left( [\widetilde{E_*^{\circ}}] + ( [\widetilde{L_{\{1\}}}]\frac{A_1}{1 - A_1} +[\widetilde{L_{\{2\}}}]\frac{A_2}{1 - A_2}) (\LL-1) \right) \frac{A_\ast}{1 - A_\ast}.\end{multline*}
Next, note that by the definition of blow-up, we have isomorphisms $L_{\{1\}} \cong E^{\circ}_I \cong L_{\{2\}}$ which, moreover,  extend (by Lemma 3.2 in \cite{GVLM}) to isomorphisms between the corresponding unbranched covers appearing in the definition of our motivic zeta function. Also, by Lemma 3.1 of \cite{GVLM}, the (Zariski) locally trivial fibration $E_*^{\circ} \to E^{\circ}_I$ (with fiber $\PP^1 \setminus \{2 \ {\rm points}\}=\CC^*$) can be lifted to a $\CC^*$-fibration $\widetilde{E_*^{\circ}} \to \widetilde{E^{\circ}_I}$. Finally, the Zariski triviality implies that $[\widetilde{E_*^{\circ}},\sigma_{\Delta'}] = [ \widetilde{E^{\circ}_I},\sigma_{\Delta}] (\LL-1)$, which reduces the above claim to checking that
\begin{equation}\label{pr}[\widetilde{E^{\circ}_I}] (\LL -1)\frac{A_1}{1 - A_1}\frac{A_2}{1 - A_2}  = [\widetilde{E^{\circ}_I}] (\LL -1)\frac{A_\ast}{1 - A_\ast} \left( 1 + \frac{A_1}{1 - A_1} + \frac{A_2}{1 - A_2} \right). \end{equation}
After we factor out $[\widetilde{E^{\circ}_I},\sigma_{\Delta}] (\LL -1)$ and reduce to common denominator, this amounts to
$$A_1 A_2 ( 1 - A_\ast) = A_\ast (1 - A_1) (1 - A_2) + A_\ast A_1 (1- A_2) + A_\ast (1 - A_1) A_2,$$
or equivalently, $A_1 A_2 = A_\ast$. The latter equality  follows from   $m_\ast= m_1 + m_2$ and $\nu_\ast = \nu_1 + \nu_2$. \hfill$\square$
\end{example}


\begin{proof} {(of Proposition \ref{ind})}\newline 
The proof below is an enhanced version of (and relies on) that of \cite[Proposition 3.8]{GVLM}, and we will use freely facts proved in loc.cit. 

\smallskip

As already mentioned, the proof of Proposition \ref{ind} is by induction on the dimension of the center of blow-up. The beginning of induction (i.e., the case of one point) is proved in Proposition \ref{pt}. Note that, in general, the center of blow-up $Z$ is either contained in a component $E_i$ of $E$, or it is transversal to it, or it doesn't intersect it at all. We refer to components of the second kind as {\it transversal} components of $E$ (with respect to $Z$). By collecting all indices $i$ of components of $E$ containing $Z$, we note that  the center $Z$ is contained in a set $E_I$ (for some $I \subseteq J$) given by intersections of components of the deleted divisor. In particular, $Z$ gets an induced stratification from that of $E_I$. So, there is a dense open stratum $Z \cap E^{\circ}_I$ in $Z$, together with positive codimension strata obtained by intersecting $Z$ with collections of transversal components.

\smallskip

We begin the proof by first studying the case when the center of blowup is of type $E_I$, for some $I \subseteq J$. 

Let $X'$ be the blowup of $X$ along the center $Z$ defined as the intersection $E_I:=\bigcap_{i=1}^k E_i$ of some of the irreducible components of the deleted divisor $E$ (so $I=\{1,2,\cdots,k\}$), and also assume that the irreducible components $E_j$ for $j= k+1, \dots, \ell$ of $E$ intersect the center $Z$ transversally, and no other components of $E$ intersect  $Z$. In this case, $Z$ is stratified by a top dimensional open dense stratum $E^\circ_I$, and by positive codimension strata obtained by intersecting $Z$ with some of the transversal components $E_j$ (with $j= k+1, \dots, \ell$), i.e., strata of the form $E^\circ_{I \cup K}$, where $K \ne \emptyset$ and $K \subseteq \{k+1, \dots, \ell\}$. 
Therefore,  the contributions to the motivic infinite cyclic zeta function $Z_{X,E}(T)$ supported on $Z$ are
\begin{equation}\label{lhs}
\prod_{i=1}^k \frac{A_i}{1 - A_i} \left ( [\widetilde{E^\circ_I}](\mathbb{L} -1)^{k-1} + \sum_{{\substack{K \subseteq \{k+1, \dots, \ell\} \\
 K\ne \emptyset}}}[\widetilde{E^\circ_{I \cup K}}](\mathbb{L} -1)^{k+ |K| -1} \prod_{h \in K} \frac{A_h}{1 - A_h}\right ).
\end{equation}

After blowing up $X$ along $Z$, we get the deleted divisor $$E' = (\bigcup_{j\in J} E'_j) \cup E_\ast$$ of $X'=Bl_Z X$, where $E_\ast$ is the exceptional locus of the blow-up and $E'_j$ is the proper transform of $E_j$ (for $j \in J$). Note that, by the choice of the center $Z$ of blow-up, the $k$-fold intersection of the proper transforms of components $E_i$ with $i=1, \dots, k$ is empty,  i.e., $$ \bigcap_{i=1}^k E'_i = \emptyset.$$ The exceptional divisor $E_\ast$ is stratified by the top dimensional open stratum $L_{\emptyset}=E^\circ_\ast$, by the codimension $s$ (for  $s <k$) strata obtained by intersecting $E_*$  with $s$-fold intersections of the components $E'_1, \dots, E'_k$ of $E'$, i.e., by the strata   $L_G$ with $G \subset I$ is a proper subset of $I$, and by  strata contained in intersections of the proper transforms $E'_j$ for $j = k+1, \dots, \ell$ of the transversal components, i.e., strata of the type  $L_{G \cup K}$ where $G \subset I$ ($G \neq I$) a (possibly empty) subset and $K$ is a nonempty subset of  $\{k+1, \dots, \ell\}$.  Therefore the contributions to the motivic infinite cyclic zeta function  $Z_{X', E'}(T)$ supported on $E_\ast$ are:

\begin{multline}\label{rhs}
\frac{A_\ast}{1 - A_\ast}  \left( [\widetilde{E^\circ_\ast}] +  \sum_{{\substack{G \subset I,\\ G \ne \emptyset, I}}} [\widetilde{L_G}](\mathbb{L}-1)^{|G|} \prod_{g \in G} \frac{A_g}{1 - A_g}  + \right .\\\left . \sum_{{\substack{G \subset I,\\ G \ne I}}}\sum_{{\substack{K \subseteq \{k+1, \dots, \ell\} \\
 K\ne \emptyset}}} [\widetilde{L_{G \cup K}}] (\mathbb{L}-1)^{|K|+|G|} \prod_{h \in G \cup K} \frac{A_h}{1 - A_h}   \right) .\end{multline}

We can now apply induction on the dimension of the center of blowup, and the exclusion-inclusion principle, to show that 
strata of the center $Z$ which are contained in intersections of the transversal components $E_j$, for $j=k+1,\cdots, \ell$, give equal contributions to the zeta functions $Z_{X, E}(T)$ and $Z_{X', E'}(T)$. More precisely,
for each positive codimension stratum $E^\circ_{I \cup K}$ (with $K \subseteq \{k+1, \dots, \ell\}, 
 K\ne \emptyset$) of $Z$, we get by induction for the blow-up of $E_K$ along the center $Z \cap E_K=E_{I \cup K}$, and with deletion divisor $E_K \cap (\sum_{i=1}^k E_i)$, a relation of the type
\begin{multline}\tag{$\ast_{K}$}\label{induc}
(\mathbb{L} -1)^{k -1} \prod_{i=1}^k \frac{A_i}{1-A_i} 
\cdot \left( [\widetilde{E^\circ_{I \cup K}}] + \sum_{{\substack{K \subset K' \subseteq \{k+1, \dots, \ell \} \\ K'\setminus K \neq \emptyset}}}  [\widetilde{E^{\circ}_{I \cup K'}}](\mathbb{L}-1)^{|K'\setminus K|} \prod_{h \in K'\setminus K} \frac{A_h}{1-A_h} \right) \\
= \frac{A_\ast}{1-A_\ast} \left ( [\widetilde{L_K}]  +  \sum_{{\substack{K \subset K' \subset \{k+1, \dots, \ell\} \\ K' \setminus K \neq \emptyset}}}[\widetilde{L_{K'}}](\mathbb{L}-1)^{|K' \setminus K|}  \prod_{h \in K' \setminus K} \frac{A_h}{1-A_h} \right . \\
 + \sum_{{\substack{G \subset I,\\ G \ne \emptyset, I}}} [\widetilde{L_{G \cup K}}](\mathbb{L}-1)^{|G|}  \prod_{g \in G} \frac{A_g}{1-A_g} \\
\left . + \sum_{{\substack{G \subset I,\\ G \ne \emptyset, I}}} \sum_{{\substack{K \subset K' \subset \{k+1, \dots, \ell\} \\ K' \setminus K \neq \emptyset}}}[\widetilde{L_{G \cup K'}}](\mathbb{L}-1)^{|G \cup (K' \setminus K)|}  \prod_{g \in G} \frac{A_g}{1-A_g}  \prod_{h \in K' \setminus K} \frac{A_h}{1-A_h} \right ) .\end{multline}
By summing up all the products (\ref{induc})$\cdot (\mathbb{L} -1)^{|K|} \prod_{h \in K} \frac{A_h}{1 - A_h}$ for the positive codimension strata $E^\circ_{I \cup K}$ of $Z$, we reduce the comparison of (\ref{lhs}) and (\ref{rhs}) to proving the identity:
\begin{equation}\label{finaleq}
[\widetilde{E^\circ_I}](\mathbb{L} -1)^{k-1}\prod_{i=1}^k \frac{A_i}{1-A_i} =\frac{A_\ast}{1-A_\ast} \left(  [\widetilde{E^\circ_\ast}]+ \sum_{{\substack{G \subset I,\\ G \ne \emptyset, I}}} [\widetilde{L_G}] (\mathbb{L}-1)^{|G|} \prod_{g \in G} \frac{A_g}{1-A_g}\right),
\end{equation}
i.e.,  it remains to show that the contribution of the dense open stratum of the center $Z$ to the zeta function $Z_{X, E}(T)$ coincides with the contribution to $Z_{X', E'}(T)$ of the strata supported on the exceptional divisor $E_*$ and which are not contained in the proper transforms of the transversal components of $E$. 

Note that in the proof of Proposition 3.8 in \cite{GVLM} we have shown that, for any subset $G \subset I=\{ 1, \dots, k\}$, $G \neq I$ (but including the empty set corresponding to  $L_\emptyset = E^\circ_\ast$), we have the following identification:
$$[\widetilde{L_G}] = [ \widetilde{E^{\circ}_I}] (\LL-1)^{k - |G| - 1}.$$
Therefore we can factor out $[\widetilde{E^{\circ}_I}] (\LL-1)^{k - 1}$  in (\ref{finaleq}) and we are left to check that
$$\prod_{i=1}^k \frac{A_i}{1-A_i} =  \frac{A_\ast}{1-A_\ast} \left( 1+ \sum_{{\substack{G \subset I,\\ G \ne \emptyset, I}}}  \prod_{g \in G} \frac{A_g}{1-A_g} \right).$$
The last claim follows from Lemma \ref{auxlemA}.  Indeed, it reduces to $\prod_{i=1}^k A_i=A_{\ast}$, which holds since $m_*=\sum_{i \in I} m_i$ and $\nu_*=\sum_{i \in I} \nu_i$. (Here $\codim_{E_I}Z=0$.)

\medskip

Let us now explain the proof in the general case, i.e., when  the center $Z$ is strictly contained in some set $E_I$, for $I \subseteq J$, and let $I=\{1,\cdots,k\}$. Assume that the codimension of $Z$ in $X$ is $d+1\geq k$. Therefore the codimension of $Z$ in $E_I$ is $d+1-k$. Again, by induction, it suffices to show   that the contribution of the dense open stratum $Z^{\circ}:=Z \cap E_I^{\circ}$ of the center $Z$ to the zeta function $Z_{X, E}(T)$ coincides with the contribution to $Z_{X', E'}(T)$ of the strata supported on the exceptional divisor $E_*=\mathbb{P}(\nu_Z)$ which are not contained in the proper transforms of the transversal components components of $E$ (with respect to $Z$), that is, 
\begin{equation}\label{finaleq2}
[\widetilde{Z}^\circ](\mathbb{L} -1)^{k-1}\prod_{i=1}^k \frac{A_i}{1 - A_i} =\frac{A_\ast}{1 - A_\ast} \left( [\widetilde{E^\circ_\ast}]  + \sum_{{\substack{G \subseteq I,\\ G \ne \emptyset}}} [\widetilde{L_G}](\mathbb{L}-1)^{|G|} \prod_{g \in G} \frac{ A_g}{1 - A_g} \right).
\end{equation}
On the right hand side of (\ref{finaleq2}), we use the same notation as before for the stratification of the exceptional divisor $E_*$. Note that in this case we have to also allow $G=I$ in the sum of the right hand side term of (\ref{finaleq2}) because $Z$ is strictly contained in $E_I$, and therefore $\cap_{i=1}^k E'_i \neq \emptyset$. 

Recall that in the proof of Proposition 3.8 in \cite{GVLM} we have showed that,   for any subset $G \subset I = \{1, \dots, k\}$, $G \neq I$ (but {including the empty set} corresponding to $L_\emptyset = E^\circ_\ast$), we have:
\begin{equation}\label{20a} [\widetilde{L_G}] = [ \widetilde{Z^{\circ}}] \LL^{d-k+1}(\LL -1)^{k - |G| - 1}.\end{equation}
Moreover, for $G=I$, we have proved that 
\begin{equation}\label{20b} [\widetilde{L_I}] = [\widetilde{Z^\circ}](\LL^{d-k} + \LL^{d-k-1}+\cdots + \LL + 1).\end{equation}
By substituting the equalities (\ref{20a}) and (\ref{20b}) into  (\ref{finaleq2}), and factoring out $[\widetilde{Z}^\circ](\LL-1)^{k-1}$, it remains to show that:
\begin{multline}\label{finaleq3}
\prod_{i=1}^k \frac{A_i}{1 - A_i}  = \frac{A_\ast}{1 - A_\ast} \left (  \LL^{d-k+1} \Big( 1 + \sum_{{\substack{G \subset I,\\ G \ne \emptyset, I}}}  \prod_{g \in G} \frac{ A_g}{1 - A_g} \Big) + (\LL^{d-k+1} -1) \prod_{i=1}^k \frac{ A_i}{1 - A_i} \right).
\end{multline}
Reducing to the common denominator $\prod_{i=1}^k (1-A_i) (1 - A_\ast)$, it is then enough to check that
\begin{multline*}\prod_{i=1}^k {A_i}(1 - A_\ast)  = {A_\ast}  \LL^{d-k+1} \left (  \prod_{i=1}^k (1 - A_i) + \sum_{{\substack{ G \subseteq I \\  G \ne \emptyset}}} \prod_{g\in G} A_g \prod_{j \in I \setminus G} (1 - A_j) \right ) - A_\ast \prod_{i=1}^k \ A_i.\end{multline*}
By using Lemma \ref{auxlemA}, this can be further reduced to showing that
$\prod_{i=1}^k A_i=A_{\ast} \LL^{d-k+1}$, which holds since
  $m_\ast= m_1 + \cdots + m_k $ and $\nu_\ast = \nu_1 + \cdots + \nu_k + d - k + 1$. (Here $\codim_{E_I}Z=d-k+1$). \end{proof}

\begin{remark}The proof of  Proposition \ref{ind} works whenever (\ref{defnmzf}) and (\ref{afterbl}) are defined. Remarks \ref{rmk1} and \ref{rafterbi} explain that it is sufficient (although not necessary) to assume that the holonomy takes strictly positive values. An alternative sufficient hypothesis is that either $\nu_i \ne 0$ or $m_i \ne 0$ for all $i \in J \cup \{ \ast \}$.  The rest of the proof does not assume that the holonomy map takes always strictly positive values. In fact, the proof shows  that the expression defining $Z^A_{X,E,\Delta, \nu}(T)$ in (\ref{defnmzf}) is well-defined in the sense of the Definition \ref{equivnbhdld} if and only if the holomony and log-discrepancy values satisfy the correct blow-up relations given in (\ref{blowupholomonyeqn}) and (\ref{eleprocld}) and never vanish simultaneously. 

As a consequence, Theorem \ref{Welldefined} holds as long as either $\nu_i \ne 0$ or $m_i \ne 0$ for all irreducible components of $E$ and their transforms along any sequence of blow-ups and blow-downs.  
  
In  \cite{V2}, Veys has used the condition that either $\nu_i \ne 0$ or $m_i \ne 0$ to introduce {\it naive motivic zeta functions} in the cases of  a Kawamata log terminal and divisorial log terminal (dlt) pair $(X,E)$ and in the case that the pair $(X,E)$ has no strictly log canonical singularities assuming the existence of a log relative minimal model of $(X,E)$ which is dlt.   See Section \ref{ns}, in particular (\ref{naivedlzf}) and (\ref{naivezf}),  for an explanation of the adjective ``naive''.\end{remark}


\section{Relation with the Denef-Loeser motivic zeta function}\label{relationwithmotivic}


Denef and Loeser introduced the {\it local motivic zeta function} at a point $x$ for a non-constant morphism $f: \mathbb{C}^{d+1} \rightarrow \mathbb{C}$  with $f(x)=0$ (e.g., see \cite[Definition 3.2.1]{Barcelona} and the references therein) as

\begin{equation}\label{blaa}
Z_{f,x}(T):=\sum_{n\geq 1} [\mathcal{X}_{n,1}]\LL^{-(d+1)n}T^n \in \mathcal{M}^{\hat{\mu}}_{\CC}[[T]],\end{equation} where $\mathcal{X}_{n,1}$ denotes the set of $(n+1)$-jets $\varphi$ of $\mathbb{C}^{d+1}$ such that $f \circ \varphi = t^n + \dots$. 
Note that there is a good action of the group $\mu_n$ (hence of $\hat\mu$) on $\mathcal{X}_{n,1}$ by $\lambda \times \varphi \mapsto \varphi( \lambda \cdot t)$. Moreover, Denef and Loeser \cite[Theorem 3.3.1]{Barcelona} proved  with the help of an embedded resolution of the singularities of $f$  that  the motivic zeta function $Z_{f,x}(T)$ belongs to $\mathcal{M}^{\hat{\mu}}_{\CC}[[T]]_{{\rm sr}}$. Then they also defined  the {\it local motivic Milnor fibre} $\mathcal{S}_{f,x}$  (e.g., see \cite[Definition 3.5.3]{Barcelona} and the references therein) as a limit in the sense of (\ref{deflim}):  
\begin{equation}\label{bla} \mathcal{S}_{f,x}: = - \lim_{T \to + \infty} Z_{f,x}(T) \in \mathcal{M}^{\hat{\mu}}_{\CC}, \end{equation}
see also \cite[Section 2.8]{GLM} and \cite[Lemma 4.1.1]{DenefLoeserJAG}.

In \cite[Theorem 5.1]{GVLM} we related the concepts of local motivic Milnor fibre $\mathcal{S}_{f,x}$ and the motivic infinite cyclic cover $S_{X,E, \Delta}$. The following result extends this relation to the  realm of motivic zeta function and the motivic infinite cyclic zeta function, respectively. 
\begin{theorem}\label{milnorfiberinfcover}
Let $f: \mathbb{C}^{d+1} \rightarrow \mathbb{C}$ be a 
non constant morphism with $f(x)=0$,  and 
 $p : X \rightarrow \mathbb{C}^{d+1}$ 
be a log-resolution of the singularities of the pair 
$(\CC^{d+1}, f^{-1}(0))$. 
Choose $p$ so that $(p^{-1}(x))_{{\rm red}}$ is a union of components of $(p^{-1}(f^{-1}(0)))_{{\rm red}}$. Let $E = \sum_{j \in J} E_j$ be the decomposition  of $(p^{-1}(f^{-1}(0)))_{{\rm red}}$ into irreducible components, and  let $A = \{i \in J \, | \, E_i \subset p^{-1}(x)\}$. 
Then the following hold:
\begin{enumerate}
\item[(1)] For $\epsilon > 0$ small enough, and $B(x,\epsilon)$ a ball of radius $\epsilon$ centered at $x \in \CC^{d+1}$, the resolution map $p$ provides a biholomorpic identification between  
$B(x,\epsilon)\setminus \{f=0\}$ and $T^\ast_{E^A}$, the punctured regular neighborhood of the divisor $E^A:=\sum_{i \in A} E_i$. 
In particular, the map $\gamma \rightarrow \int_{\gamma} {df \over f}$ 
can be viewed as a holonomy homomorphism 
$\Delta: \pi_1(T^\ast_{E^A}) \rightarrow \mathbb{Z}$  
of the punctured neighborhood of $E^A$. This holonomy map   
takes the boundary $\delta_i$ of any small disk transversal 
to the irreducible component $E_i$ of $E^A$ to the multiplicity $m_i$ of $E_i$ in the divisor of $f \circ p$, i.e., $\Delta(\delta_i)=m_i$ for all $i \in A$.  
\item[(2)] The map taking the boundary $\delta_i$ of any small disk transversal to the irreducible component $E_i$ of $E^A$, to the multiplicity of $E_i$ in the divisor of the pullback of the standard volume form $\omega = { d} x_1 \cdots { d} x_{d+1}$ plus one, defines a log-discrepancy $\nu$, i.e., $\nu_i -1 = {\rm ord}_{E_i} p^\ast \omega$.
\item[(3)] One has the identity in $\mathcal{M}^{\hat{\mu}}_\mathbb{C}[[T]]_{{\rm sr}}$:
$$Z_{f,x}(T) = Z^A_{X,E,\Delta, \nu}(T).$$ 
\end{enumerate}
\end{theorem}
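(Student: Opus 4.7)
The plan is to use the Denef--Loeser formula for the local motivic zeta function in terms of the chosen log resolution $p$, and then to identify all its constituents with those appearing in Definition \ref{maindef}. Parts (1) and (2) prepare the stage, and Part (3) is the substance.

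\textbf{Part (1)} is obtained by choosing $\epsilon$ small enough that $p^{-1}(B(x,\epsilon))$ is a regular neighborhood of $p^{-1}(x)$. Since $p^{-1}(x)_{\rm red}$ is, by construction of the resolution, a union of components of $E$ (namely those indexed by $A$), and $p$ is biregular away from $\{f=0\}$, the restriction of $p$ gives a biholomorphism $B(x,\epsilon)\setminus\{f=0\}\cong T^\ast_{E^A}$. In local coordinates $(y_1,\dots,y_{d+1})$ at a generic point of $E_i$ with $E_i=\{y_1=0\}$, we have $f\circ p=u\, y_1^{m_i}$ for a local unit $u$, so $\frac{d(f\circ p)}{f\circ p}=\frac{du}{u}+m_i\frac{dy_1}{y_1}$ and integration along $\delta_i$ gives $\Delta(\delta_i)=m_i$. \textbf{Part (2)} is then immediate from the lemma preceding Definition \ref{maindef}, applied to the holomorphic volume form $p^\ast\omega$ on $T^\ast_{E^A}$: indeed $p^\ast\omega$ is holomorphic on $T^\ast_{E^A}$ (because $p$ is biregular off $\{f=0\}$) and its divisor is supported on $E$, so the assignment $\nu_i = {\rm ord}_{E_i}(p^\ast\omega)+1$ satisfies the blow-up relation (\ref{eleprocld}).

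\textbf{Part (3)} is the main content. The Denef--Loeser formula \cite[Theorem 3.3.1]{Barcelona}, applied to the resolution $p$ in its local form at $x$, reads
$$Z_{f,x}(T)=\sum_{\emptyset\ne I\subseteq J}\bigl[E_I^\circ\cap p^{-1}(x),\tau_I\bigr](\mathbb{L}-1)^{|I|-1}\prod_{i\in I}\frac{\mathbb{L}^{-\nu_i}T^{m_i}}{1-\mathbb{L}^{-\nu_i}T^{m_i}},$$
where $\tau_I$ is the canonical $\mu_{m_I}$-action on the unramified $m_I$-fold cover of $E_I^\circ\cap p^{-1}(x)$ coming from $m_I$-th roots of the local unit $u$ in $f\circ p=u\prod_{i\in I}y_i^{m_i}$. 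Because $p^{-1}(x)=\bigcup_{i\in A}E_i$, the stratum $E_I^\circ\cap p^{-1}(x)$ is empty when $I\cap A=\emptyset$, while for any $i_0\in I\cap A\ne\emptyset$ we have $E_I\subseteq E_{i_0}\subseteq p^{-1}(x)$, so $E_I^\circ\cap p^{-1}(x)=E_I^\circ$. Hence the range of summation matches the one in (\ref{defnmzf}) and the exponents $(m_i,\nu_i)$ coincide by Parts (1) and (2). All that remains is to identify $[E_I^\circ,\tau_I]$ with $[\widetilde{E_I^\circ},\sigma_I]$ as classes in $K_0({\rm Var}^{\hat{\mu}}_\mathbb{C})$.

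\textbf{The main obstacle} is precisely this identification of the $\mu_{m_I}$-covers. Both are classified by homomorphisms $\pi_1(E_I^\circ)\to\mathbb{Z}/m_I\mathbb{Z}$: $\tau_I$ by the monodromy of $u^{1/m_I}$, and $\sigma_I$ by the reduction modulo $m_I$ of the map $\Delta_I$ from (\ref{maptildeEI}). In the local chart, $u=(f\circ p)/\prod y_i^{m_i}$, so for a loop in $E_I^\circ$ the winding of $u$ equals the contribution of the full holonomy $\Delta$ reduced modulo $m_I$ (the meridians $\delta_i$ each contribute $m_i\equiv 0\pmod{m_I}$). Hence the two classifying maps agree and the covers are $\mu_{m_I}$-equivariantly isomorphic. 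This is the algebro-topological comparison already carried out in \cite[Theorem 5.1]{GVLM} for the $T\to+\infty$ specialization; transporting it $\mu_{m_I}$-equivariantly to the zeta function level completes the proof.
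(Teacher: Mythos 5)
Your proposal is correct and, in substance, takes the same route as the paper: the paper's own proof is a one-line citation to \cite[Theorem 5.1]{GVLM} (which identifies the covers $\widetilde{E_I^\circ}$ of the infinite-cyclic construction with the covers appearing in the Denef--Loeser resolution formula) together with \cite[Section 2.8]{GLM} and \cite[Lemma 4.1.1]{DenefLoeserJAG} (which supply the resolution formula for $Z_{f,x}(T)$). Your proof unpacks exactly that argument, correctly reducing Part (3) to the $\mu_{m_I}$-equivariant comparison of the cover defined by $m_I$-th roots of the unit $u$ with the cover classified by $\Delta_I \pmod{m_I}$, and showing the two classifying maps $\pi_1(E_I^\circ)\to\mathbb{Z}/m_I\mathbb{Z}$ coincide.

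Two small points of care. First, a notational slip: the classes in your Denef--Loeser formula should be those of the \emph{covers} (i.e., $[\widetilde{E_I^\circ\cap p^{-1}(x)},\tau_I]$) rather than of $E_I^\circ\cap p^{-1}(x)$ itself with an action; your subsequent text makes clear this is what you mean, but the displayed formula should reflect it. Second, the Lemma you invoke for Part (2) is stated in the paper for a smooth \emph{projective} manifold, while the resolution $X$ of $\mathbb{C}^{d+1}$ is not projective; this is a gap shared with the paper's own exposition and is harmless since the relevant blow-up relation (\ref{eleprocld}) for $\nu_i-1={\rm ord}_{E_i}p^\ast\omega$ is a local computation on the relative canonical divisor, but it would be worth flagging that one either compactifies or argues locally.
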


\begin{proof} The result follows from Theorem 5.1 in \cite{GVLM} and \cite[Section 2.8]{GLM}, see also  \cite[Lemma 4.1.1]{DenefLoeserJAG}.\end{proof}

\begin{remark} Note that Theorems \ref{milnorfiberinfcover} and \ref{Welldefined} give a direct argument of the fact that the right-hand side of the formula in \cite[Theorem 3.3.1]{Barcelona} expressing the motivic zeta function in terms of the data of a log resolution  is in fact independent of the choice of log resolution. This was a priori known only because of the relation (\ref{blaa}) defining the motivic zeta function in terms of arc spaces (which are intrinsic invariants), see also the discussion in \cite[Section 3.5]{Barcelona}. Moreover, we do not make use of the third relation in the definition of the equivariant Grothendieck ring for proving our result.
\end{remark}


\section{Further specializations of the motivic infinite cyclic zeta function}\label{ns}

In the previous section, we discussed the specialization of motivic infinite cyclic zeta function to the motivic infinite cyclic cover $S^A_{X,E,\Delta}$. The remarks after  Definition \ref{micc} explain how to recover $S^A_{X,E,\Delta}$ from $Z^A_{X,E, \Delta, \nu}(T)$. In our previous work we proved that in general the Betti realization of $S^A_{X,E,\Delta}$ is given (up to sign) by the cohomology with compact support of the infinite cyclic cover of finite type 
 $\widetilde{T}^\ast_{X,E,\Delta}$ of the punctured neighborhood $T^*_{X,E}$, see \cite[Proposition 4.2]{GVLM}. We also hinted that there should be a stronger version of this result involving the Hodge realization of $S^A_{X,E,\Delta}$, see \cite[Remark 4.3]{GVLM}. In this section, we discuss further specializations of our motivic infinite zeta functions.

For any  strictly positive integer $e$  and any character $\alpha$ of order $e$ we can use the equivariant Euler characteristic $\chi_{{\rm top}}( -, \alpha)$ to introduce a {\it twisted topological zeta function} as
$$\chi_{{\rm top}}\left((\mathbb{L}-1)Z^A_{X,E, \Delta, \nu}(\mathbb{L}^{-s}), \alpha \right).$$
One can also consider $p$-adic realizations as in \cite[Section 2.4]{DenefLoeserJAG}, see also \cite[Section 5.3]{N}.

One can also introduce a {\it naive motivic infinite cyclic zeta function} $Z^{{\rm naive}, A}_{X,E, \Delta, \nu}(T)$ as 
\begin{equation}\label{naivezf}\sum_{\substack{\emptyset \not = I \subseteq J\\ A \cap I \ne \emptyset}} [{E^\circ_I}](\mathbb{L}-1)^{|I|} \prod_{i \in I} \frac{\mathbb{L}^{-\nu_i}T^{m_i}}{1 - \mathbb{L}^{-\nu_i}T^{m_i}} \in \mathcal{M}_{\CC}[[T]]_{{\rm sr}}.\end{equation}
It is easy to check that under the hypothesis of Theorem \ref{milnorfiberinfcover} the zeta function $Z^{{\rm naive}, A}_{X,E, \Delta, \nu}(T)$ coincides with the naive motivic zeta function of Denef and Loeser, which is given by 
\begin{equation}\label{naivedlzf}Z^{{\rm naive}}_{f,x}(T):=\sum_{n\geq 1} [\mathcal{X}_{n}]\LL^{-(d+1)n}T^n \in \mathcal{M}_{\CC}[[T]],\end{equation} where $\mathcal{X}_{n}$ denotes the set of $(n+1)$-jets $\varphi$ of $\mathbb{C}^{d+1}$ such that $f \circ \varphi = at^n + \dots$, for some $a \ne 0$.

Hartmann \cite[Corollaries 8.5 and 8.6]{H1} has introduced a $\mathcal{M}_\CC$-linear  quotient map $\mathcal{M}^{{\rm \mu}}_\CC \rightarrow  \mathcal{M}_\CC$ that allows to recover  $Z^{{\rm naive}, A}_{X,E, \Delta, \nu}(T)$  from  $Z^A_{X,E, \Delta, \nu}(T)$. The naive motivic infinite cyclic cover equals $$(\mathbb{L}-1)Z^A_{X,E, \Delta, \nu}(T)/ \hat{\mu} \in \mathcal{M}_\mathbb{C},$$  
where $Z^A_{X,E, \Delta, \nu}(T)/ \hat{\mu}$ denotes the image under the extension to $\mathcal{M}^{\hat{\mu}}_\mathbb{C}[[T]] \rightarrow \mathcal{M}_\mathbb{C}[[T]]$ of the above mentioned map. See also \cite[Remarks 7.1 and 7.2]{H2}.

There are two further specializations of the naive infinite cyclic motivic zeta function worth mentioning. Firstly, there is a {\it Hodge infinite cyclic cover zeta function} defined as
$$\mathcal{H}^A_{X, E, \Delta, \nu}(T):=\sum_{\substack{\emptyset \not = I \subseteq J\\ A \cap I \ne \emptyset}} H({E^\circ_I}; u,v) \prod_{i \in I} \frac{(uv -1)T^{m_i}}{(uv)^{\nu_i}-T^{m_i}} \in \mathbb{Q}(u,v)[[T]],$$
where $H(-;u,v)$ is the Hodge-Deligne polynomial. Compare with \cite[Section 2.2]{SV}. 
Secondly, there is the topological Euler realization $Z^{{\rm top}, A}_{X,E, \Delta, \nu}(s)$ given by 
$$\chi_{{\rm top}}(Z^{{\rm naive}, A}_{X,E, \Delta, \nu}(\LL^{-s})) = \sum_{\substack{\emptyset \not = I \subseteq J\\ A \cap I \ne \emptyset}} \chi_{{\rm top}}(E^\circ_I) \prod_{i \in I} \frac{1}{\nu_i + s m_i} \in \QQ(s),$$
where $\chi_{{\rm top}}(-)$ denotes the topological Euler characteristic. It is again immediate to check that in the case  of Theorem \ref{milnorfiberinfcover} this coincides with the topological zeta function introduced by Denef and Loeser. 

Finally, Veys \cite{V2} has investigated how to use the theory of motivic zeta functions to generalize the theory of $E$-stringy invariants of Batyrev beyond the log terminal case.  Schepers and Veys showed in \cite[Proposition 2.3]{SV}, as a consequence of the inversion of the adjunction of Ein, Musta\c{t}a and Yasuda \cite[Theorem 1.6]{EMY}, that under the hypothesis of Theorem \ref{milnorfiberinfcover} and assuming that $f^{-1}(0)$ is irreducible normal (and canonical), 
the local stringy $E$-function of the hypersurface $f^{-1}(0)$ can be recovered as a {\it residue} of the local Hodge zeta function, i.e., the local stringy $E$-function can be obtained as: 
$$-\frac{1}{uv(uv-1)}\mathcal{H}^A_{X, E, \Delta, \nu}(T)(T-uv)|_{T=uv}.$$


\section{Motivic zeta functions for arbitrary divisors. Relation to the monodromy conjecture}\label{s6}

The framework of the zeta functions associated to infinite cyclic covers seems to be useful for the study of the monodromy conjecture and some of its generalizations (like for example the generalized monodromy conjecture proposed by N\'emethi and Veys in \cite{V, NV1, NV2}).

The monodromy conjecture predicts a relation between the poles of the local zeta function $Z_{f,x}(T)$ and the eigenvalues of local monodromies (i.e., monodromies of local Milnor fibers at points) of the hypersurface defined by $f$.  However, it is worth to notice here that the Grothendieck ring is not a domain and therefore one should be careful when speaking of  poles.  The {\it Igusa-Denef-Loeser  motivic monodromy conjecture} can be formulated as follows (see for example \cite[Remark at the end of Section 2.4]{DenefLoeserJAG} and \cite[Section 3.4.1]{Loe}).

\begin{conjecture}\label{IDLconj} There exists a finite subset $S$ of $\ZZ_{>0} \times \ZZ_{>0}$ such that 
$$Z_{f,x}(T) \in \mathcal{M}^{\hat{\mu}}_\mathbb{C}\left [ T, \frac{1}{1 - \LL^{-a}T^b} \right ]_{(a,b) \in S}$$ and such that for each $(a,b) \in S$, the value $\exp(-2 \pi i b/a)$ is a monodromy eigenvalue of $f$ at some point of $f^{-1}(0)$. 
\end{conjecture}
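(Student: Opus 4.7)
The statement is Conjecture \ref{IDLconj}, the motivic version of the Igusa--Denef--Loeser monodromy conjecture, which is a famous open problem; what follows is a strategy inspired by the paper's machinery rather than a complete proof. First, I would invoke Theorem \ref{milnorfiberinfcover} to fix a log-resolution $p: X \to \CC^{d+1}$ of $(\CC^{d+1}, f^{-1}(0))$ and write
$$Z_{f,x}(T) \; = \; Z^A_{X,E,\Delta,\nu}(T) \; = \; \sum_{\substack{\emptyset \ne I \subseteq J \\ A \cap I \ne \emptyset}} [\widetilde{E^\circ_I}, \sigma_I](\LL-1)^{|I|-1} \prod_{i \in I} \frac{\LL^{-\nu_i}T^{m_i}}{1-\LL^{-\nu_i}T^{m_i}},$$
with $A = \{i \in J \mid E_i \subset p^{-1}(x)\}$. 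This identity already places $Z_{f,x}(T)$ in the ring $\mathcal{M}^{\hat\mu}_\CC[T,(1-\LL^{-\nu_i}T^{m_i})^{-1}]_{i \in A}$ and produces an initial candidate set $S_0 = \{(\nu_i, m_i) : i \in A\}$. The real work is to cut $S_0$ down to a minimal $S$ and to match each surviving pair with a monodromy eigenvalue.

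Second, I would use Theorem \ref{Welldefined} to observe that a refined candidate $S$ can be taken inside the intersection $\bigcap_p S_0(p)$ over all log-resolutions $p$; by the weak factorization theorem this intersection is reached through a sequence of blow-ups, and the computations carried out in Proposition \ref{ind} already exhibit how candidate factors combine and cancel when one passes from one resolution to another. A concrete attack would be to choose $p$ sufficiently adapted (for instance so that $E_A$ is in general position with respect to the strata carrying nontrivial monodromy) so that the redundant poles are already visibly cancelled in the formula above.

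Third, to connect the remaining pairs $(a,b) \in S$ with monodromy eigenvalues, I would combine the formula above with A'Campo's product expression for the characteristic polynomial $\zeta_f(t)$ of the Milnor monodromy of $f$ at $x$,
$$\zeta_f(t) \; = \; \prod_{i \in A} (1 - t^{m_i})^{-\chi(E^\circ_i \cap p^{-1}(x))}.$$
The classes $[\widetilde{E^\circ_I}, \sigma_I]$ appearing in (\ref{defnmzf}) carry the $\hat\mu$-action that encodes precisely the monodromy data entering A'Campo's formula; the goal would be to show that every surviving pair $(\nu_i, m_i)$ descends from an irreducible component $E_i$ whose Euler characteristic contribution to $\zeta_f$ is nonzero, thereby forcing $\exp(-2\pi\ii b/a)$ to be a root of $\zeta_f$.

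The main obstacle, and the reason the conjecture is open, is the cancellation phenomenon at the heart of the second step: most pairs in $S_0$ collapse after the sum over $I$ is carried out, and there is no general geometric criterion that decides when a candidate pole actually survives. Even assuming one finds the correct $S$, the third step also fails in general without further input, because an $E_i$ with $\chi(E^\circ_i \cap p^{-1}(x)) = 0$ can a priori still contribute a pole to $Z_{f,x}(T)$; ruling this out is the core of the known partial results (Loeser for plane curves, Artal--Cassou-Nogu\`es--Luengo--Melle-Hern\'andez for superisolated surface singularities, Lemahieu--Van Proeyen for non-degenerate Newton boundaries, and others), and extending them to the generality of Conjecture \ref{IDLconj} appears to require a new idea, perhaps a deeper exploitation of the $\hat\mu$-equivariant structure that the motivic infinite cyclic zeta function $Z^A_{X,E,\Delta,\nu}(T)$ makes explicit.
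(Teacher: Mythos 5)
The statement in question is Conjecture \ref{IDLconj}, the Igusa--Denef--Loeser motivic monodromy conjecture, which the paper states but does not prove: it is a well-known open problem, recalled in Section \ref{s6} only to motivate the authors' proposed variant for infinite cyclic covers. You correctly recognize it as open and present a strategy rather than a proof, which is the honest and appropriate response; there is therefore no proof in the paper against which to compare you.

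As for the strategy itself: realizing $Z_{f,x}(T)$ as $Z^A_{X,E,\Delta,\nu}(T)$ via Theorem \ref{milnorfiberinfcover}, reading off the candidate pole set $\{(\nu_i,m_i): i\in A\}$ from (\ref{defnmzf}), and then attempting to match the surviving candidates with monodromy eigenvalues through an A'Campo-type product formula is a fair summary of the standard line of attack, and the obstruction you name --- that there is no known geometric criterion predicting which candidate poles cancel in the sum over $I$, nor a way to rule out poles coming from components with vanishing Euler characteristic contribution --- is precisely where all general attempts have stalled. Two cautions, neither of which changes the verdict. First, formalizing the ``true'' pole set as $\bigcap_p S_0(p)$ over all log resolutions is not quite right; the correct formalization is the intrinsic membership statement in Conjecture \ref{IDLconj} itself, since $\mathcal{M}^{\hat{\mu}}_\mathbb{C}$ is not a domain and the notion of pole is delicate there, a point the paper explicitly flags in Section \ref{s6}. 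Second, the A'Campo formula you invoke should be stated with care about sign conventions and which open strata of which multiplicity enter; the $\hat\mu$-equivariant classes $[\widetilde{E^\circ_I},\sigma_I]$ do encode this data, but extracting the zeta function of monodromy from them requires the full formalism rather than just the Euler characteristics $\chi(E_i^\circ\cap p^{-1}(x))$. In short, nothing here is a gap in your reasoning --- it is a gap in the state of the art, and you have described it accurately.
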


There are other, naive (see \cite[Section 6.8]{V1}), topological (see \cite[Conjecture 3.3.2]{DL}) or $p$-adic (see \cite{I}) versions of the monodromy conjecture that come from considering  different specializations of the zeta function, like the ones mentioned in the previous section.  Naive, topological and/or $p$-adic versions  of the monodromy conjecture have been proven in, among others, the case of curves by Loeser, certain surfaces by Veys, Rodrigues, Lemahieu, Van Proeyen, Bories, superisolated singularities and quasi-ordinary singularities by Artal-Bartolo, Cassou-Nogu\'es, Luengo and Melle-Hern\'andez, and hyperplane arrangements by Budur,  Musta\c{t}a and Teitler. The Igusa-Denef-Loeser {motivic monodromy conjecture} is different from the naive one as explained in \cite[Section 4 and Subsection 5.2]{Cau}. Nicaise and Veys have informed  us \cite{NVprivate} that the Igusa-Denef-Loeser {motivic monodromy conjecture} can be checked in the case of plane curves (though this fact is not available in the literature). 

One of the first difficulties in the study of the monodromy conjecture is the computation of the set $S$, whose elements are called {\it poles}. Explicit formulae for the zeta function in terms of an embedded resolution of singularities (see for instance \cite[Theorem 3.3.1]{Barcelona}) show that any exceptional divisor gives a candidate pole $(a, b)$.  But concrete computations also show the existence of many cancellations and that most irreducible exceptional divisors do not contribute to $S$. 

On the other hand, typically only a few eigenvalues are recovered from the set $S$. Moreover,  those eigenvalues coming from $S$ cannot be determined before-hand.  

N\'emethi and Veys have proposed an alternative approach in \cite{V, NV1, NV2} to study the topological Monodromy Conjecture in the case of singularities of plane curves. By considering a finite family of {\it allowed} differential forms associated to $f$ (see \cite[Section 1.5.3 and Definition 4.1.1]{NV2} for more details), they obtain a finite family of topological zeta functions such that their corresponding sets of poles recover all (and only) the eigenvalues of $f$ at $x$. This works because, while the first coordinate of each element of $S$ is totally determined by $f$, the second coordinate depends on the differential form. Moreover, N\'emethi and Veys show that arbitrary (i.e., not allowed) differential forms may produce poles $(a, b')$ such that $\exp(-2 \pi i b'/a)$ is not an eigenvalue of the local monodromies of $f^{-1}(0)$. 

Cauwbergs has extended the approach of N\'emethi and Veys in the case of naive motivic zeta function \cite[Corollary 5.2]{Cau}. However, he has also shown in \cite[Section 5.2]{Cau}, using twisted topological zeta functions as specializations of the (non naive) motivic zeta functions, that there is no set of allowed forms able to recover all eigenvalues from the poles of the (non naive) motivic zeta function. 
 
\smallskip

Let us conclude this note by suggesting a version of the monodromy conjecture in the case of the infinite cyclic covers, as considered in this paper. Let $D$ be any divisor on a  smooth quasi-projective variety $Y$, with $T^*_{Y,D}$ a punctured neighborhood of $D$ in $Y$. Under suitable assumptions, we can associate a motivic zeta function to the pair $(Y,D)$ as follows. Fix, as before, a holonomy map $\Delta:\pi_1(T^*_{Y,D}) \to \mathbb{Z}$, and let $\widetilde{T}^*_{Y,D}$ be the corresponding infinite cyclic cover. Let $(X,E)$ be a log resolution of $(Y,D)$, with log discrepancies $\nu_i$, $i \in J$, defined as usual in terms of the relative canonical divisor of the resolution map. Assume moreover that the induced holonomy (denoted also by $\Delta$) on the punctured neighborhood $T^*_{X,E}$ of $E$ in $X$  makes the infinite cyclic cover  $\widetilde{T}^*_{X,E}$ (and hence  $\widetilde{T}^*_{Y,D}$) of finite type. Then the Alexander modules $H^i(\widetilde{T}^*_{Y,D};\mathbb{C})$ ($i \geq 0$) are torsion over the Laurent polynomial ring $\mathbb{C}[t^{\pm 1}]$, with the $t$-multiplication corresponding to the action of a generator of the covering group $\mathbb{Z}$ (e.g., see \cite[Proposition 2.4 and Remark 2.5]{GVLM}). The order of the torsion module $H^i(\widetilde{T}^*_{Y,D};\mathbb{C})$ is called the $i$-th Alexander polynomial of $T^*_{Y,D}$, and will be denoted by $\delta_i(t)$.

The monodromy zeta function $Z_{Y,D,\Delta}(T)$ of the infinite cyclic cover $\widetilde{T}^*_{Y,D}$ can then be defined by formula (\ref{defnmzf}) of Definition \ref{maindef}.  According to Theorem \ref{Welldefined}, this is a well-defined invariant of the punctured neighborhood of $D$ in $Y$.

It is then interesting to compare the poles of the motivic zeta function $Z_{Y,D,\Delta}(T)$ with the zeros of the Alexander polynomials $\delta_i(t)$ ($i \geq 0$)  of the punctured neighborhood $T^*_{Y,D}$ of $D$ in $Y$. It was already noted in \cite{GVLM} that the zeros of $\delta_i(t)$ ($i \geq 0$) are roots of unity. In fact, one would be tempted to believe that the following claim is true:

\smallskip

{\it If $(a,b)$ is a pole of the motivic zeta function $Z_{Y,D,\Delta}(T)$ in the sense of Conjecture \ref{IDLconj}, then $\exp(-2\pi i b/a)$ is a zero of some Alexander polynomial $\delta_i(t)$ ($i \geq 0$) of the punctured neighborhood of $D$.}

\smallskip

The motivation behind such a claim goes as follows. Let us assume that a linking number can be defined on the complement of $D$, e.g., $D$ is homologically trivial in a simply-connected algebraic manifold $Y$, and let $\Delta$ be the holonomy defined by the linking number homomorphism. If, moreover, $D$ is a Cartier divisor, then an argument involving a Mayer-Vietoris spectral sequence can be used to show that any root of an Alexander polynomial  $\delta_i(t)$ of the punctured neighborhood of $D$ in $Y$ is also an eigenvalue of some monodromy operator acting on the local Milnor fiber (with respect to a local defining equation of $D$) at some point on $D$. When coupled with the above claim (assuming it holds true), this fact would then imply a version of the Igusa-Denef-Loeser monodromy Conjecture \ref{IDLconj}. For a different variant of Conjecture \ref{IDLconj}, see \cite{Xu}.

We should also point out that in the case of plane curves the above claim is actually equivalent to the Conjecture \ref{IDLconj}, as can be easily seen by using a Mayer-Vietoris long exact sequence. Hence our claim is true in this case by \cite{NVprivate}, as already indicated above. Nevertheless, in general, the collection of zeros of the Alexander polynomials $\delta_i(t)$ ($i \geq 0$) of the punctured neighborhood is (stricly) contained in the set of eigenvalues of the local monodromies at points along the divisor $D$. So, in some sense,  the above claim complements the N\'emethi-Veys approach mentioned earlier.




\bibliographystyle{amsalpha}

\end{document}